\theoremstyle{plain}
\newtheorem{theorem}{Theorem}[section]
\newtheorem{lemma}[theorem]{Lemma}
\newtheorem{corollary}[theorem]{Corollary}
\newtheorem{proposition}[theorem]{Proposition}
\theoremstyle{remark}
\newtheorem{definition}[theorem]{Definition}
\newtheorem{remark}[theorem]{Remark}
\begin{document}

\title{Transportation cost inequalities for stochastic reaction diffusion equations on the whole line $\mathbb{R}$}

\author{Yue Li$^{1}$, Shijie Shang$^{2}$ and Tusheng Zhang$^{3}$}
\footnotetext[1]{\, School of Mathematics, University of Science and technology of China, Hefei, China. Email: liyue27@mail.ustc.edu.cn}
\footnotetext[2]{\, School of Mathematics, University of Science and technology of China, Hefei, China. Email: sjshang@ustc.edu.cn}
\footnotetext[3]{\, Department of Mathematics, University of Manchester, Oxford Road, Manchester M13 9PL, England, U.K. Email: tusheng.zhang@manchester.ac.uk}
\maketitle

\begin{abstract}
In this paper, we established quadratic transportation cost inequalities for solutions of stochastic reaction diffusion equations driven by multiplicative space-time white noise on the whole line $\mathbb{R}$. Since the space variable is defined on the unbounded domain $\mathbb{R}$, the inequalities are proved under a weighted $L^2$-norm and a weighted uniform metric in the so called $L^2_{tem}$, $C_{tem}$ spaces. The new moments estimates of the stochastic convolution with respect to space-time white noise play an important role. In addition, the transportation cost inequalities are also obtained for the stochastic reaction diffusion equations with random initial values.
\end{abstract}

\noindent
{\bf Keywords and Phrases:} Transportation cost inequalities, stochastic partial differential equations, reaction diffusion equations, concentration of measure, moment estimates for stochastic convolutions

\noindent
{\bf AMS Subject Classification:} Primary 60H15; Secondary 35R60

\section{Introduction}
Let $(E, d)$ be a metric space equipped with the  Borel  $\sigma$-field ${\cal B}$. Let $\mathcal{P}(E)$ be the class of all the probability measures on $E$. The $L^p$-Wasserstein distance  between $\mu,\nu\in\mathcal{P}(E)$ is defined by
\begin{align*}
	W_p(\nu, \mu):=\left[\inf \iint_{E\times E}d(x,y)^p\,\pi(\mathrm{d}x,\mathrm{d}y)\right]^{\frac{1}{p}},
\end{align*}
where the infimum is taken over all probability measures $\pi$ on the product space $E\times E$ with marginals $\mu$ and $\nu$. In the study of Monge-Kontorovich optimal transportation problem, this distance is interpreted as the minimal cost to transport
distribution $\nu$ into $\mu$ when the transportation cost 
from $x$ to $y$ is measured by the distance $d$ on $E$.
In applications, to estimate the unknown optimal transportation, we usually need some other quantities to control it, for instance, the relative entropy.
The relative entropy(or Kullback information) of $\nu$ with respect to $\mu$ is defined by
\[H(\nu|\mu):=\int_E \log\left(\frac{\mathrm{d}\nu}{\mathrm{d}\mu}\right)\, \mathrm{d}\nu ,\]
if $\nu$ is absolutely continuous with respect to $\mu$, and $+\infty$ otherwise.
\begin{definition}
	We say that the measure $\mu$ satisfies the $L^p$-transportation cost inequality if there exists a constant $C>0$ such that for all probability measures $\nu$,
	\begin{equation}\label{1.2}
		W_p(\nu, \mu)\leq \sqrt{2C H(\nu| \mu)}.
	\end{equation}
	The case $p=2$ is referred to as the quadratic transportation cost inequality, also Talagrand inequality.
\end{definition}
Consider the following stochastic reaction diffusion equation:
\begin{align}\label{3.1}
	\left\{
	\begin{aligned}
		\frac{\partial u}{\partial t}(t,x)&=\frac{1}{2}\frac{\partial^2 u}{\partial x^2}(t,x)+b(u(t,x))+ \sigma(u(t,x))\frac{\partial^2 W}{\partial t\partial x}(t,x),\ t\in[0,\infty),x\in \mathbb{R},\\
		u(0,x)&=u_0(x), \quad x\in \mathbb{R},
	\end{aligned}
	\right.
\end{align}
where the initial value $u_0$ is a deterministic function, $\frac{\partial^2 W}{\partial t\partial x}(t,x)$ is a space-time white noise on some filtrated probability space $(\Omega, {\cal F}, \{{\cal F}_t\}_{t\geq 0}, \mathbb{P})$, here $\{{\cal F}_t\}_{t\geq 0}$
is the filtration satisfying the usual conditions generated by the white noise. The coefficients
$b(\cdot), \sigma(\cdot): \mathbb{R}\rightarrow \mathbb{R}$ are deterministic
measurable functions.
\vskip 0.3cm
The purpose of this paper is to establish the quadratic transportation cost inequality for the law of the solution of the stochastic reaction diffusion equation (\ref{3.1}) on the so called $L^2_{tem}$, $C_{tem}$ spaces (defined below).
\vskip 0.4cm
In 1996, Talagrand \cite{T1} established the quadratic transportation cost inequality with sharp constant $C=1$ for Gaussian measures on $\mathbb{R}^k$. Since then, transportation cost inequalities and their applications have been widely studied.
The transportation cost inequalities also have close connections with other functional inequalities like log-Sobolev inequality, Poincar\'{e} inequality, the concentration of measure phenomenon, optimal transport problem, and large deviations, see \cite{L,OV,M,T1,T2,BGL,BG,FS,GRS,U,W} and references therein.
\vskip 0.3cm
We note that the transportation cost inequality depends on the underlying metric. The stronger the metric, the stronger the inequality.

\vskip 0.5cm
The Talagrand type transportation cost inequality in \cite{T1} was generalized by
D. Feyel and A. S. \"{U}st\"{u}nel \cite{FU} to the framework of the abstract Wiener space.
It has also been established for the laws of some stochastic processes on the path spaces.
Considering diffusion processes, H. Djellout, A. Guillin and L. Wu \cite{DGW} obtained the quadratic transportation cost inequality under the $L^2([0, T],\mathbb{R}^k)$-metric for stochastic differential equations(SDEs) on $\mathbb{R}^k$. Later,
the quadratic transportation cost inequality under some uniform metric was obtained in \cite{WuZ1} also for SDEs on $\mathbb{R}^k$.
There are now many articles on transportation cost inequalities for various models, see e.g. \cite{WuZ2} for stochastic evolution equations on some Hilbert space $H$ under the metric $L^2([0, T], H)$, \cite{P} for multidimensional semi-martingales including time inhomogeneous diffusions, \cite{Wu,Ma,Sa} for SDEs with pure jump, L\'{e}vy or fractional noises.

\vskip 0.5cm
Concerning stochastic reaction diffusion equations on bounded domains, there exist several works on the  quadratic transportation cost inequality. Let us recall some of them.
B. Boufoussi and S. Hajji \cite{BH} obtained the quadratic transportation cost inequality for stochastic reaction diffusion equations driven by space-time white noise and driven by fractional noise under the $L^2$-metric: $d_2(\xi,\gamma)=(\int_0^T\int_0^1|\xi(t,x)-\gamma(t,x)|^2dtdx)^{\frac{1}{2}}$.
In \cite{KS}, the authors established the quadratic transportation cost inequality for stochastic reaction diffusion equations under the  $L^2$-metric $d_2(\xi,\gamma)=(\int_0^T\int_0^1|\xi(t,x)-\gamma(t,x)|^2dtdx)^{\frac{1}{2}}$ and under the uniform metric $||\xi-\gamma||=\sup_{0\leq t\leq T, 0\leq x\leq 1}|\xi(t,x)-\gamma(t,x)|$ in the case of additive noise.
In \cite{SZ1}, the authors established the quadratic transportation cost inequality for stochastic reaction diffusion equations driven by multiplicative noise under the uniform metric $||\xi-\gamma||$. Later the authors in \cite{SW} obtained the quadratic transportation cost inequality under the uniform norm for stochastic reaction diffusion equations driven by time-white and space-colored Gaussian noise.


\vskip 0.3cm

The purpose of this paper is to establish the quadratic transportation cost inequality for stochastic reaction diffusion equations driven by multiplicative space-time white noise defined on the whole real line $\mathbb{R}$.
Since the space domain is unbounded, we are forced  to work on the so called  $L^2_{tem}$ and $C_{tem}$ spaces (see the definitions  \eqref{defL}, \eqref{defC} below ).
To obtain the quadratic transportation cost inequality, as a crucial step we need to establish  the precise lower order (more difficult than the higher order) moment estimates of stochastic convolution with respect to the space-time white noise under the weighted $L^2$-norm and the weighted supremum norm, which is of independent interest.

\vskip 0.5cm
The rest of the paper is organized as follows. In Section \ref{S:2}, we recall the setup for stochastic reaction diffusion equations and state the main result of the paper. In Section 3 we establish the new moment estimates for stochastic convolutions with respect to the space-time white noise. Section 4 and Section 5 contain the proofs of the quadratic transportation cost inequalities. In Section 6 we prove the quadratic transportation cost inequality for stochastic reaction diffusion equations with random initial conditions. Some inequalities regarding heat kernel are presented in the Appendix.

\vskip 0.5cm
Convention on constants. Throughout the paper the constants denoted by $c_1, c_2,\dots$ are positive and their precise values are not important. The dependence of constants on parameters if needed will be indicated, e.g., $C_T, \Theta_\lambda$.

\section{Statement of the main results}\label{S:2}

\quad In this section, we will recall the setup of  stochastic reaction diffusion equations driven by space-time white noise and state the main result of the paper.
Let $C_0^2(\mathbb{R})$ denote the set of continuous functions with compact supports whose second derivatives are also  continuous.
We say that an adapted,
continuous random field $\{u(t,x): (t,x)\in [0,\infty)\times \mathbb{R}\}$ is a solution to the stochastic partial differential equation (SPDE) (\ref{3.1}) if for $t\geq 0$,
$\phi \in C_0^2(\mathbb{R})$
\begin{align*}
	&\int_{\mathbb{R}} u(t,x)\phi(x)\mathrm{d}x=\int_{\mathbb{R}} u_0(x)\phi(x)\mathrm{d}x
	+\frac{1}{2}\int_{0}^{t} \mathrm{d}s\int_{\mathbb{R}} u(s,x)\phi^{\prime\prime}(x)\mathrm{d}x\nonumber\\
	&+\int_{0}^{t}\mathrm{d}s\int_{\mathbb{R}} b(u(s,x))\phi(x)\mathrm{d}x+ \int_{0}^{t}\int_{\mathbb{R}}\sigma(u(s,x))\phi(x)W(
	\mathrm{d}s,\mathrm{d}x),\ \mathbb{P}\text{-a.s.}
\end{align*}

It was shown in \cite{S} that $u$ is a solution to SPDE (\ref{3.1}) if and only if $u$ satisfies the following integral equation
\begin{align}\label{3.3}
	u(t,x)=&P_tu_0(x)+\int_{0}^{t}\int_{\mathbb{R}} p_{t-s}(x,y)b(u(s,y))\mathrm{d}y\mathrm{d}s\nonumber\\
	&+ \int_{0}^{t}\int_{\mathbb{R}} p_{t-s}(x,y)\sigma(u(s,y))W(\mathrm{d}s,\mathrm{d}y),\ \mathbb{P}\text{-a.s.}
\end{align}
where $p_{t}(x,y):=\frac{1}{\sqrt{2\pi t}} e^{-\frac{(x-y)^2}{2t}}$, and $\{P_t\}_{t\geq 0}$ is the corresponding heat semigroup on $\mathbb{R}$.

\vskip 0.5cm

Next we introduce some spaces used in the paper.
For given $\lambda>0$, the space
\begin{align*}
	L^2_\lambda:=\left\{f:\mathbb{R}\to\mathbb{R}\text{ is measurable and } \int_{\mathbb{R}} |f(x)|^2e^{-2\lambda|x|}\mathrm{d}x<\infty\right\}
\end{align*}
equipped with the inner product
\begin{align*}
	\langle f,g\rangle_{L^2_\lambda}=\int_{\mathbb{R}} |f(x)g(x)|e^{-2\lambda|x|}\mathrm{d}x,\ f,g\in L^2_\lambda
\end{align*}
is a Hilbert space, and denote the induced norm by $\| \cdot \|_{L^2_{\lambda}}$.
Define
\begin{align*}
	E_\lambda:=\left\{f\in C(\mathbb{R}):\sup_{x\in\mathbb{R}}\left( |f(x)|e^{-\lambda|x|}\right)<\infty \right\},
\end{align*}
for given $\lambda>0$, and equipped with the metric
\begin{align*}
	\varrho_\lambda(f,g):=\sup_{x\in\mathbb{R}}\left( |f(x)-g(x)|e^{-\lambda|x|}\right),\ f,g\in E_\lambda,
\end{align*}
$E_\lambda$ is a Polish space. We also recall that the $L^2_{tem}$ and $C_{tem}$ spaces are defined by
\begin{align}
	L^2_{tem}:&=\left\{f\in C(\mathbb{R}): \int_{\mathbb{R}} |f(x)|^2e^{-2\lambda |x|}\mathrm{d}x<\infty \text{ for all } \lambda>0 \right\},\label{defL}\\
	C_{tem}:&=\left\{f\in C(\mathbb{R}): \sup_{x\in\mathbb{R}}\left(|f(x)|e^{-\lambda |x|}\right)<\infty \text{ for all } \lambda>0 \right\},\label{defC}
\end{align}
and endowed with the metrics respectively defined by
\begin{align*}
	\rho(f,g):&=\sum_{n=1}^{\infty}\frac{1}{2^n}\min\left\{1, \|f-g\|_{L^2_{1/n}}\right\},\ f,g\in L^{2}_{tem}, \\
	\varrho(f,g):&=\sum_{n=1}^{\infty}\frac{1}{2^n}\min\left\{1, \varrho_{1/n}(f,g)\right\},\ f,g\in C_{tem}.
\end{align*}
Then $f_n\to f$ in $L^2_{tem}$ iff for every $\lambda>0$, $\|f_n-f\|_{L^2_\lambda}\to0$, and $f_n\rightarrow f$ in $C_{tem}$ iff for every $\lambda>0$, $\varrho_{\lambda}(f_n,f)\rightarrow 0$. It is known that $C_{tem}$ and $L^2_{tem}$ are Polish spaces.

\vskip 0.5cm
Let $(E, d)$ be a Polish space.
Consider a continuous Markov process on $E$ with a given transition kernel $P_{t}(x, \cdot)$. For  $T>0$ and $\mu \in \mathcal{P}(E)$, let $P^{\mu}$ denote the distribution of the Markov process  with initial distribution $\mu$, i.e., $P^{\mu}$ is the unique probability measure on the path space
\begin{align*}
	C([0, T],E)\text{ equipped with the metric } \tilde{d}(\varphi, \psi):=\sup_{t \in[0, T]} d\left(\varphi(t), \psi(t)\right),
\end{align*}
When $\mu=\delta_{x}$, the Dirac measure at $x \in E$, we simply denote $P^{\mu}=P^{x}$. Then
\begin{align*}
	P^{\mu}=\int_{E} P^{x} \mu(\mathrm{d} x), \quad \mu \in \mathcal{P}(E).
\end{align*}

\vskip 0.5cm
We introduce the following hypotheses.
\begin{itemize}
	\item[({\bf H1})] There exists a constant $L_b$ such that for all $x,y\in \mathbb{R}$,
	\begin{align*}
		|b(x)-b(y)| \leq\, & L_b |x-y|.
	\end{align*}
	\item[({\bf H2})] There exist constants $K_{\sigma}$ and $L_{\sigma}$ such that for all $x,y\in \mathbb{R}$,
	\begin{align*}
		&({\bf H2(a)})  &|\sigma(x)|\leq&\,  K_{\sigma}, \\
		&({\bf H2(b)}) &|\sigma(x)-\sigma(y)| \leq\, &L_{\sigma}|x-y|.
	\end{align*}
\end{itemize}

It is well known that under the hypotheses ({\bf H1}) and ({\bf H2}), SPDE (\ref{3.1}) admits a unique random field solution $u(t,x)$. In fact, for the existence and uniqueness the diffusion coefficient $\sigma(\cdot)$ does not need to be bounded, the stronger assumption ({\bf H2}) is needed for proving the transportation cost inequality.
\begin{proposition}[\cite{S}]\label{exist2}
	Assume that (\textbf{H1}) and  (\textbf{H2(b)}) hold and $u_0\in C_{tem}$. Then there exists a random field solution to the stochastic reaction diffusion equation \eqref{3.1} with sample paths a.s. in $C([0,T], C_{tem})$.
\end{proposition}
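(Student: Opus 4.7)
The plan is to construct the solution via a Picard iteration in the mild formulation \eqref{3.3}, using the whole family of weighted metrics $\{\varrho_\lambda:\lambda>0\}$ to compensate for the unboundedness of the spatial domain. I would set $u^{(0)}(t,x):=P_t u_0(x)$ and inductively define
\begin{align*}
u^{(n+1)}(t,x) &= P_t u_0(x) + \int_0^t\!\!\int_{\mathbb{R}} p_{t-s}(x,y)\, b(u^{(n)}(s,y))\, dy\, ds\\
&\quad + \int_0^t\!\!\int_{\mathbb{R}} p_{t-s}(x,y)\, \sigma(u^{(n)}(s,y))\, W(ds,dy).
\end{align*}
Hypotheses (\textbf{H1}) and (\textbf{H2(b)}) give the linear growth bounds $|b(x)|+|\sigma(x)|\le c(1+|x|)$, so all three pieces make sense once one has appropriate control of $u^{(n)}$ in the weighted norms.

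The heart of the argument is a weighted moment estimate at each iteration level. The deterministic part is routine: the standard bound $P_t(e^{\lambda|\cdot|})(x)\le C_\lambda e^{\lambda|x|}e^{\lambda^2 t/2}$ shows $P_\cdot u_0\in C([0,T],E_\lambda)$ for every $\lambda>0$, and Minkowski's inequality combined with (\textbf{H1}) controls the drift term in $\varrho_\lambda$ by $C\int_0^t(1+\varrho_\lambda(u^{(n)}(s,\cdot),0))\,ds$. The stochastic convolution is the crux. Burkholder--Davis--Gundy gives
\begin{align*}
E\Bigl|\int_0^t\!\!\int_{\mathbb{R}} p_{t-s}(x,y)\sigma(u(s,y))\,W(ds,dy)\Bigr|^p \le C_p\, E\Bigl(\int_0^t\!\!\int_{\mathbb{R}} p_{t-s}(x,y)^2\sigma(u(s,y))^2\, dy\, ds\Bigr)^{p/2},
\end{align*}
and the Gaussian decay of $p_{t-s}(x,y)^2$ allows one to pull an $x$-weight $e^{-\lambda|x|}$ outside at the cost of a $y$-weight $e^{\lambda'|y|}$ with $\lambda'>\lambda$. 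Combining the three estimates and iterating a Gronwall argument through a finite chain $\lambda<\lambda_1<\lambda_2$ of weights yields uniform-in-$n$ moment bounds in $E_\lambda$ and a geometric contraction for the Cauchy increments.

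With this in hand, Kolmogorov's continuity criterion applied to the space-time increments of $u^{(n)}$ produces continuous modifications, and passing to the limit in the mild equation delivers a random field $u$ whose sample paths lie in $C([0,T],E_\lambda)$ for every $\lambda>0$. Since $\varrho$ on $C_{tem}$ is the metric generated by the family $\{\varrho_{1/n}\}_n$, this automatically places the sample paths in $C([0,T],C_{tem})$. Uniqueness follows from a Gronwall argument applied to the $L^p(\Omega,E_\lambda)$-norm of the difference of any two mild solutions, using exactly the same weighted estimates.

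The principal obstacle is the \emph{loss of weight} in the stochastic convolution estimate: because $\sigma$ is only Lipschitz, the integrand $\sigma(u(s,y))^2$ grows at spatial infinity, and the Gaussian factor $p_{t-s}(x,y)^2$ cannot simultaneously absorb an $x$-weight and keep the $y$-weight at the same exponent. This is precisely what forces the natural state space to be $C_{tem}$ rather than any single $E_\lambda$; for mere existence a finite chain of weights is enough, but a sharper version of this obstruction is what motivates the refined moment estimates developed in Section 3 for the transportation cost inequality.
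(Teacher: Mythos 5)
The paper does not prove this proposition at all: it is quoted from Shiga \cite{S}, and your Picard-iteration-in-weighted-spaces outline is essentially the argument given there, so in spirit you are reconstructing the cited proof rather than competing with one in this paper. The skeleton (mild formulation, iteration, BDG, Kolmogorov continuity, working in every $E_\lambda$ to land in $C_{tem}$) is sound. However, your paragraph identifying the ``principal obstacle'' is mistaken, and in a way worth correcting because the paper's own heat-kernel estimates contradict it: inequality \eqref{103.2} reads $\int_{\mathbb{R}} p_{t}(x,y)^2 e^{\eta|y|}\,\mathrm{d}y \leq \frac{1}{\sqrt{\pi t}}e^{\eta^2 t/4}e^{\eta|x|}$, i.e.\ the squared Gaussian kernel absorbs the $y$-weight and returns the $x$-weight \emph{at the same exponent}, only at the cost of a $\lambda$- and $t$-dependent constant. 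There is no loss of weight, no need for a finite chain $\lambda<\lambda_1<\lambda_2$, and the fixed-point argument closes within a single $E_\lambda$ for each fixed $\lambda>0$. The reason the natural state space is $C_{tem}$ is simply that $u_0\in C_{tem}$ only guarantees $u_0\in E_\lambda$ for every $\lambda$, not for a distinguished one, so one runs the (self-contained) $E_\lambda$ argument for each $\lambda>0$ and intersects; this does not ``motivate'' the refined moment estimates of Section 3, which are needed for a different purpose (low-order moments of the stochastic convolution uniformly in time). A second, smaller gap: passing from pointwise moment bounds to control of $\varrho_\lambda$, i.e.\ of $\sup_{x\in\mathbb{R}}$ over the unbounded spatial domain, requires more than a bare appeal to Kolmogorov's criterion --- one has to patch together continuity estimates on unit intervals and use the decay of $e^{-\lambda|x|}$ to sum them (or use a factorization argument as in Lemma \ref{moment1}); this is exactly the point where the unboundedness of $\mathbb{R}$ actually bites, and your sketch passes over it.
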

\begin{proposition}\label{exist1}
	Assume that (\textbf{H1}) and  (\textbf{H2(b)}) hold and $u_0\in L^2_{tem}$. Then there exists a random field solution to the stochastic reaction diffusion equation \eqref{3.1} with sample paths a.s. in $C([0,T], L^2_{tem})$.
\end{proposition}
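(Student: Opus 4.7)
The plan is to derive Proposition~\ref{exist1} from Shiga's Proposition~\ref{exist2} by approximating the initial datum, exploiting the inclusion $C_{tem}\subset L^2_{tem}$ (so that any solution produced by Proposition~\ref{exist2} automatically has paths in $C([0,T],L^2_{tem})$). Concretely, I pick smooth cutoffs $\chi_n\in C_c^\infty(\mathbb R)$ with $0\le\chi_n\le 1$ and $\chi_n\equiv 1$ on $[-n,n]$, and set $u_0^{(n)}:=u_0\,\chi_n$. Each $u_0^{(n)}$ is bounded and continuous, hence in $C_{tem}$, and by dominated convergence $\|u_0^{(n)}-u_0\|_{L^2_\lambda}\to 0$ for every $\lambda>0$. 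Proposition~\ref{exist2} then supplies solutions $u^{(n)}$ with initial datum $u_0^{(n)}$ and sample paths in $C([0,T],C_{tem})\subset C([0,T],L^2_{tem})$.

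The core task is to show that $\{u^{(n)}\}$ is Cauchy in $L^2\bigl(\Omega;C([0,T],L^2_\lambda)\bigr)$ for every $\lambda>0$. Writing $v^{(n,m)}:=u^{(n)}-u^{(m)}$ in mild form,
\begin{align*}
v^{(n,m)}(t,x)=\,&P_t\bigl(u_0^{(n)}-u_0^{(m)}\bigr)(x)\\
&+\int_0^t\!\!\int_{\mathbb R}p_{t-s}(x,y)\bigl[b(u^{(n)}(s,y))-b(u^{(m)}(s,y))\bigr]\mathrm{d}y\,\mathrm{d}s\\
&+\int_0^t\!\!\int_{\mathbb R}p_{t-s}(x,y)\bigl[\sigma(u^{(n)}(s,y))-\sigma(u^{(m)}(s,y))\bigr]W(\mathrm{d}s,\mathrm{d}y),
\end{align*}
one takes the $L^2_\lambda$ norm and applies: (i) a weighted heat-semigroup bound of the form $\|P_tf\|_{L^2_\lambda}^2\le C_\lambda e^{c\lambda^2 t}\|f\|_{L^2_\lambda}^2$ from the appendix; (ii) Cauchy--Schwarz (using $\int p_{t-s}(x,y)\,\mathrm{d}y=1$) on the drift, combined with the Lipschitz bound (\textbf{H1}); and (iii) the weighted $L^2_\lambda$ moment estimate for the stochastic convolution developed in Section~3, combined with (\textbf{H2(b)}). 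After taking supremum in time and expectation, this yields
\begin{align*}
\mathbb E\sup_{s\le t}\|v^{(n,m)}(s)\|_{L^2_\lambda}^2\,&\le\, C_{\lambda,T}\|u_0^{(n)}-u_0^{(m)}\|_{L^2_\lambda}^2+C_{\lambda,T}\int_0^t\mathbb E\sup_{r\le s}\|v^{(n,m)}(r)\|_{L^2_\lambda}^2\,\mathrm{d}s,
\end{align*}
and Gronwall's lemma forces $\mathbb E\sup_{t\le T}\|v^{(n,m)}(t)\|_{L^2_\lambda}^2\to 0$ as $n,m\to\infty$ for every $\lambda>0$.

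Extracting a diagonal subsequence over $\lambda=1/k$, $k\in\mathbb N$, produces an a.s.\ limit $u\in C([0,T],L^2_{tem})$, and passing to the limit term by term in the mild equation for $u^{(n)}$ (using (\textbf{H1}), (\textbf{H2(b)}), and the Section~3 estimates once more) shows that $u$ satisfies \eqref{3.3} and is the required solution. The main obstacle is step (iii) above: controlling $\mathbb E\sup_{s\le t}\bigl\|\int_0^s\!\int_{\mathbb R}p_{s-r}(\cdot,y)\sigma(u(r,y))\,W(\mathrm{d}r,\mathrm{d}y)\bigr\|_{L^2_\lambda}^2$ by the $L^2_\lambda$ norm of $\sigma(u)$. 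This is precisely the weighted stochastic-convolution estimate highlighted in the introduction and established in Section~3; once it is in hand, the rest of the proof reduces to the routine Gronwall argument sketched above.
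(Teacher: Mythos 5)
Your argument is essentially correct, but it takes a different route from the paper's. The paper gives no written proof of Proposition~\ref{exist1}: it states that the proof is ``similar to that of Proposition~\ref{exist2}'', i.e.\ one reruns Shiga's construction (Picard iteration in the mild form) directly in the weighted $L^2_\lambda$ framework, using the heat-kernel bounds \eqref{103.1}--\eqref{kernal} and an $L^2_\lambda$-valued stochastic-convolution estimate in place of the $C_{tem}$ ones. You instead treat Proposition~\ref{exist2} as a black box, cut off the initial datum ($u_0^{(n)}=u_0\chi_n\in C_{tem}$, which is legitimate since the paper's $L^2_{tem}$ consists of \emph{continuous} functions, so $u_0\chi_n$ is continuous with compact support and $\|u_0^{(n)}-u_0\|_{L^2_\lambda}\to0$), and prove continuity of the solution map in the initial condition via Lemma~\ref{moment2} and Gronwall. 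The stability estimate you need is exactly the computation carried out in the proof of Theorem~\ref{Lresult} (and again in Proposition~\ref{lip1}), so nothing new is required there; note only that absorbing the $\epsilon\,\mathbb{E}\sup_t\|v^{(n,m)}(t)\|^2_{L^2_\lambda}$ term requires the a priori finiteness of $\mathbb{E}\sup_t\|u^{(n)}(t)\|^2_{L^2_\lambda}$, which does follow from the moment bounds quoted from \cite{S} in Section~5. Your approach buys reuse of the known $C_{tem}$ result; the paper's (implicit) approach builds the solution from scratch but in one pass.

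One point you should not gloss over: convergence in $L^2(\Omega;C([0,T],L^2_\lambda))$ only produces $u(t,\cdot)$ as an $L^2_\lambda$-equivalence class, whereas the statement (and the paper's definition of a solution, and the definition \eqref{defL} of $L^2_{tem}$ itself) requires a jointly measurable, adapted, \emph{continuous} random field with $u(t,\cdot)\in C(\mathbb{R})$. After passing to the limit in the mild equation you must therefore redefine $u(t,x)$ as the right-hand side of \eqref{3.3} and verify that this version is a continuous random field --- the drift and initial terms are smoothed by the heat kernel, and the stochastic convolution has a continuous modification by the standard Kolmogorov argument under (\textbf{H2(b)}). This is routine and implicit in \cite{S}, but it is a genuine step your sketch omits.
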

The proof of Proposition \ref{exist1} is similar to that of Proposition \ref{exist2}. We omit the details.

Here are the main results.
\begin{theorem}\label{Lresult}
	Suppose the hypotheses {\rm({\bf H1})} and {\rm({\bf H2})} hold. If $u_0\in L^2_{tem}$, then, the law of the solution $u(\cdot, \cdot)$ of SPDE (\ref{3.1}) satisfies the quadratic transportation cost inequality \eqref{1.2} on the space $C([0, T],L^2_{tem})$.
\end{theorem}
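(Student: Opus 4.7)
The plan is to adapt the Girsanov--coupling method of \cite{DGW, WuZ1, SZ1} to the weighted $L^2_{tem}$ setting. Let $\mu$ denote the law on $E:=C([0,T],L^2_{tem})$ of the solution $u$ to \eqref{3.3}, and fix $\nu\in\mathcal{P}(E)$ with $H(\nu|\mu)<\infty$. Using a Radon--Nikodym argument together with martingale representation for the filtration $\{\mathcal{F}_t\}$ generated by the space-time white noise (plus a standard truncation on $\{\int\!\int h^2\le N\}$ to bypass degeneracy of the density), I would produce a predictable random field $h(s,y)$ and an equivalent measure $\mathbb{Q}$, with $d\mathbb{Q}/d\mathbb{P}$ the Dol\'eans--Dade exponential of $\int h\,dW$, such that the law of $u$ under $\mathbb{Q}$ is $\nu$, the shifted noise $\tilde W(ds,dy):=W(ds,dy)-h(s,y)\,ds\,dy$ is space-time white under $\mathbb{Q}$, and
\begin{equation*}
H(\nu|\mu)=\tfrac{1}{2}\,\mathbb{E}_{\mathbb{Q}}\!\left[\int_0^T\!\!\int_{\mathbb{R}}h(s,y)^2\,dy\,ds\right].
\end{equation*}
On $(\Omega,\mathcal{F},\mathbb{P})$ I would then introduce $\tilde u$ as the unique mild solution of the drift-perturbed equation
\begin{equation*}
\tilde u(t,x)=P_tu_0(x)+\int_0^t\!\!\int_{\mathbb{R}}p_{t-s}(x,y)\bigl[b(\tilde u(s,y))+\sigma(\tilde u(s,y))h(s,y)\bigr]dy\,ds+\int_0^t\!\!\int_{\mathbb{R}}p_{t-s}(x,y)\sigma(\tilde u(s,y))W(ds,dy),
\end{equation*}
so that by Girsanov and uniqueness in law the $\mathbb{P}$-law of $\tilde u$ coincides with $\nu$, making $(u,\tilde u)$ a coupling of $(\mu,\nu)$ and giving $W_2^2(\nu,\mu)\le\mathbb{E}\bigl[\sup_{0\le t\le T}\rho(u(t),\tilde u(t))^2\bigr]$.

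Applying Cauchy--Schwarz in the series defining $\rho$ yields $\rho(f,g)^2\le\sum_{n\ge1}2^{-n}\|f-g\|^2_{L^2_{1/n}}$, so the task reduces to proving, for every $\lambda=1/n$,
\begin{equation*}
\mathbb{E}\Bigl[\sup_{0\le t\le T}\|u(t)-\tilde u(t)\|^2_{L^2_\lambda}\Bigr]\le C_{\lambda,T}\,\mathbb{E}\!\left[\int_0^T\!\!\int_{\mathbb{R}}h(s,y)^2\,dy\,ds\right],
\end{equation*}
with $C_{1/n,T}$ bounded in $n$, and then summing in $n$. The mild difference $u-\tilde u$ splits into three pieces: the Lipschitz drift from $b$, controlled by (H1), Cauchy--Schwarz against $p_{t-s}(x,y)\,dy\,ds$, and the heat-kernel estimate $\int_{\mathbb{R}}p_{t-s}(x,y)e^{-2\lambda|x|}dx\le 2e^{2\lambda^2(t-s)}e^{-2\lambda|y|}$ (of the type proved in the Appendix), producing a term of Gronwall form $C\int_0^t\mathbb{E}[\|u(s)-\tilde u(s)\|^2_{L^2_\lambda}]\,ds$; the perturbation drift $\int p_{t-s}\sigma(\tilde u)h$, handled similarly using the boundedness $|\sigma|\le K_\sigma$ from (H2(a)), which contributes a constant multiple of $\mathbb{E}\!\iint h^2$ whose prefactor is bounded uniformly in $\lambda\in(0,1]$; and the stochastic convolution $\int p_{t-s}[\sigma(u)-\sigma(\tilde u)]\,dW$, which demands the weighted moment estimate discussed next.

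The main obstacle is this last term, for which one needs a weighted second-moment bound for stochastic convolutions of the form
\begin{equation*}
\mathbb{E}\Bigl[\sup_{0\le t\le T}\Bigl\|\int_0^t\!\!\int_{\mathbb{R}}p_{t-s}(\cdot,y)G(s,y)W(ds,dy)\Bigr\|^2_{L^2_\lambda}\Bigr]\le C_{\lambda,T}\int_0^T\mathbb{E}\bigl[\|G(s,\cdot)\|^2_{L^2_\lambda}\bigr]ds
\end{equation*}
for predictable $G$, applied with $G=\sigma(u)-\sigma(\tilde u)$ and the Lipschitz bound $|G|\le L_\sigma|u-\tilde u|$ from (H2(b)). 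Since the stochastic convolution is not a martingale in $t$ (the integrand depends on $t$ through $p_{t-s}$), a direct Burkholder--Davis--Gundy bound is unavailable, and at the critical $p=2$ exponent the usual factorization method demands a delicate non-critical variant; this is precisely the content of the new low-order moment estimates to be developed in Section 3, which the proof will invoke as the crucial input. Combining the three contributions and applying Gronwall's lemma then closes the $L^2_\lambda$ estimate with $C_{\lambda,T}$ depending continuously on $\lambda$ and bounded on $(0,1]$; summing in $n$ and taking the infimum over couplings yields $W_2^2(\nu,\mu)\le 2C\,H(\nu|\mu)$, i.e., \eqref{1.2}.
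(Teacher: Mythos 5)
Your proposal is correct and follows essentially the same route as the paper: Girsanov representation of $\mathrm{d}\nu/\mathrm{d}\mu$ via an adapted field $h$ with $H(\nu|\mu)=\frac12\mathbb{E}^{\mathbb{Q}}\iint h^2$, a coupling of $(\mu,\nu)$ through the drift-perturbed mild equation, reduction to a $\lambda$-uniform bound on $\mathbb{E}\sup_{t\le T}\|u(t)-v(t)\|^2_{L^2_\lambda}$, the same three-term decomposition, and the new low-order ($p=2$) weighted moment estimate for the stochastic convolution (Lemma \ref{moment2}) combined with Gronwall. The only cosmetic difference is that you build the coupling under $\mathbb{P}$ with a shifted process $\tilde u$, whereas the paper works under $\mathbb{Q}$ and pairs $u$ with the auxiliary solution $v$ driven by the shifted noise $\widetilde W$, which avoids having to argue separately that the law of $\tilde u$ is $\nu$.
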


\begin{theorem}\label{Ctemresult}
	Suppose the hypotheses {\rm({\bf H1})} and {\rm({\bf H2})} hold.
	If $u_0\in C_{tem}$, then, the law of the solution $u(\cdot, \cdot)$ of SPDE (\ref{3.1}) satisfies the quadratic transportation cost inequality \eqref{1.2} on the space $C([0, T],C_{tem})$.
\end{theorem}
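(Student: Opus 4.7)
The plan is to prove the TCI via a Girsanov-type coupling, reducing the inequality to a second-moment bound on the difference of two solutions --- one driven by the original white noise, one by the Girsanov-shifted one --- and then combining the $\lambda$-weighted estimates using the series structure of the metric on $C_{tem}$. Let $\mu$ denote the law of $u$ on $C([0,T],C_{tem})$ and take any $\nu \ll \mu$ with $H(\nu|\mu)<\infty$ (otherwise \eqref{1.2} is trivial). Setting $dQ/dP := (d\nu/d\mu)(u)$ yields $Q\circ u^{-1}=\nu$ and $H(Q|P)=H(\nu|\mu)$. Since the filtration is generated by $W$, the exponential-martingale representation gives $dQ/dP=\exp\bigl(\int_0^T\!\!\int_{\mathbb{R}} h\,dW-\tfrac{1}{2}\int_0^T\!\!\int_{\mathbb{R}} h^2\,dy\,ds\bigr)$ for some predictable $h$, and by Girsanov's theorem $\widetilde{W}(ds,dy):=W(ds,dy)-h(s,y)\,ds\,dy$ is a space-time white noise under $Q$ with $\tfrac{1}{2}\mathbb{E}_Q\bigl[\int_0^T\!\!\int_{\mathbb{R}}h^2\,dy\,ds\bigr]=H(Q|P)$.

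Define $v$ as the mild solution of \eqref{3.1} under $Q$ driven by $\widetilde{W}$, with initial condition $u_0$. By Proposition~\ref{exist2}, $v\in C([0,T],C_{tem})$ $Q$-a.s.\ and the law of $v$ under $Q$ is $\mu$; hence $(u,v)$ under $Q$ couples $\nu$ and $\mu$, giving $W_2(\nu,\mu)^2\le \mathbb{E}_Q[\sup_{t\in[0,T]}\varrho(u(t,\cdot),v(t,\cdot))^2]$. Subtracting the two mild formulations yields $u-v=I_1+I_2+I_3$, where $I_1$ is the Lipschitz drift difference from $b$, the Girsanov drift is $I_2(t,x):=\int_0^t\!\!\int p_{t-s}(x,y)\sigma(u(s,y))h(s,y)\,dy\,ds$, and $I_3$ is the stochastic convolution of $\sigma(u)-\sigma(v)$ against $\widetilde{W}$.

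Fix $\lambda>0$ and work in the weighted norm $\varrho_\lambda$. The heat-kernel estimate $\int p_t(x,y)e^{\lambda|y|}\,dy\le 2e^{\lambda^2 t/2}e^{\lambda|x|}$ (an Appendix-type inequality) combined with (\textbf{H1}) controls $\sup_x|I_1(t,x)|^2e^{-2\lambda|x|}$ by $c_1(T,\lambda,L_b)\int_0^t\varrho_\lambda(u(s,\cdot),v(s,\cdot))^2\,ds$. Cauchy--Schwarz together with $\int_0^t\!\!\int p_{t-s}(x,y)^2\,dy\,ds=\sqrt{t/\pi}$, (\textbf{H2(a)}), and the trivial bound $e^{-2\lambda|x|}\le 1$ yields $\sup_{t\le T}\sup_x|I_2(t,x)|^2e^{-2\lambda|x|}\le K_\sigma^2\sqrt{T/\pi}\cdot 2H(Q|P)$, a term linear in the entropy and independent of $\lambda$. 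For $I_3$ I would apply the new second-moment estimate from Section~3 for the weighted supremum norm of the stochastic convolution against space-time white noise, together with (\textbf{H2(b)}), to bound $\mathbb{E}_Q[\sup_{s\le t}\sup_x|I_3(s,x)|^2e^{-2\lambda|x|}]$ by $c_3(T,\lambda,L_\sigma)\mathbb{E}_Q[\int_0^t\varrho_\lambda(u(r,\cdot),v(r,\cdot))^2\,dr]$. A Gronwall argument then delivers, for every $\lambda\in(0,1]$,
\[\mathbb{E}_Q\!\left[\sup_{t\in[0,T]}\varrho_\lambda(u(t,\cdot),v(t,\cdot))^2\right]\le 2\Theta_\lambda H(Q|P),\]
with $\Theta_\lambda=\Theta_\lambda(T,L_b,L_\sigma,K_\sigma)$ uniformly bounded as $\lambda\to 0$ (the factors $e^{c\lambda^2 T}$ tending to $1$). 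Finally, from $\varrho^2\le\sum_{n}2^{-n}\min\{1,\varrho_{1/n}^2\}$, swapping $\sup_t$ with the sum and using Jensen on the concave map $x\mapsto\min\{1,x\}$, I obtain $\mathbb{E}_Q[\sup_t\varrho(u(t,\cdot),v(t,\cdot))^2]\le 2(\sup_n\Theta_{1/n})H(\nu|\mu)$, which is the desired quadratic TCI with $C=\sup_n\Theta_{1/n}$.

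The main obstacle is the estimate of $I_3$: the second-moment (lower-order) bound for the stochastic convolution against space-time white noise in the weighted supremum norm is genuinely delicate at $p=2$, because of the singular behaviour of $\int_0^t\!\int_{\mathbb{R}}p_{t-s}(x,y)^2\,dy\,ds$ and its interaction with the weight $e^{-\lambda|x|}$ required on the unbounded domain $\mathbb{R}$. This is precisely the content of Section~3 and constitutes the technical heart of the argument. A secondary point is to verify that $\Theta_\lambda$ stays bounded as $\lambda\to 0$, ensuring the series combination yields a finite TCI constant.
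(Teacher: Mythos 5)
Your overall architecture --- the Girsanov coupling, the three--term decomposition of $u-v$ into drift difference, Girsanov drift and stochastic convolution, the Cauchy--Schwarz treatment of the Girsanov drift using $\int_0^t\int_{\mathbb{R}}p_{t-s}(x,y)^2\,\mathrm{d}y\,\mathrm{d}s\le 2\sqrt{t}/\sqrt{\pi}$ and $e^{-2\lambda|x|}\le 1$, and the final summation over $\lambda=1/n$ using monotonicity of the constants in $\lambda$ --- is exactly the paper's. The gap is in your estimate of $I_3$. You assert that the weighted--supremum moment estimate for the stochastic convolution bounds $\mathbb{E}_Q\sup_{s\le t}\sup_x|I_3(s,x)|^2e^{-2\lambda|x|}$ by $c_3\,\mathbb{E}_Q\int_0^t\varrho_\lambda(u(r,\cdot),v(r,\cdot))^2\,\mathrm{d}r$, which would close a Gronwall loop in the single quantity $Y(t)=\mathbb{E}\sup_{(s,x)\in[0,t]\times\mathbb{R}}|u-v|^2e^{-2\lambda|x|}$. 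But no such estimate is available: the Section~3 lemmas concern only the $L^2_\lambda$ norm, and the weighted--sup estimate actually used (Lemma \ref{210121.1036}(ii), quoted from \cite{SZ2}) has the form $\epsilon\,\mathbb{E}\sup_{(t,x)}(\cdot)+\Theta_{\epsilon,\lambda,T}\,\mathbb{E}\int_0^T\int_{\mathbb{R}}|u-v|^2e^{-2\lambda|x|}\,\mathrm{d}x\,\mathrm{d}t$, i.e.\ the remainder is a weighted space--time $L^2$ integral, not a time integral of the weighted sup. On the unbounded domain $\mathbb{R}$ this term cannot be absorbed into $\int_0^tY(s)\,\mathrm{d}s$, because $\int_{\mathbb{R}}|w(x)|^2e^{-2\lambda|x|}\,\mathrm{d}x$ is not dominated by $\sup_x\big(|w(x)|^2e^{-2\lambda|x|}\big)$ (the leftover weight integrates to $+\infty$), and shrinking the weight parameter to gain integrability would wreck the uniform-in-$n$ constant you need at the end.

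The paper closes this hole with a separate auxiliary lemma (the one establishing \eqref{F}): it proves directly that $\mathbb{E}\int_0^T\int_{\mathbb{R}}|u-v|^2e^{-2\lambda|x|}\,\mathrm{d}x\,\mathrm{d}t\le\Theta_{\lambda,T,L_b,L_\sigma,K_\sigma}\,\mathbb{E}\int_0^T\int_{\mathbb{R}}h^2\,\mathrm{d}x\,\mathrm{d}t$, by running an independent Gronwall argument on $F(t)=\mathbb{E}\int_{\mathbb{R}}|u(t,x)-v(t,x)|^2e^{-2\lambda|x|}\,\mathrm{d}x$, using the Burkholder--Davis--Gundy inequality pointwise in $x$, Fubini, and an iteration of the resulting inequality to handle the singular kernel $(t-s)^{-1/2}$ produced by the stochastic term. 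Only after this space--time $L^2$ quantity is shown to be of order $H(\nu|\mu)$ does the Gronwall inequality for $Y(T)$ close and yield \eqref{Ctemineq}. You need to supply this intermediate $L^2_\lambda$ estimate (essentially a sup-free version of the Theorem \ref{Lresult} computation); the remaining steps of your argument are sound.
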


Using the approach in \cite{WZ} we derive the following transportation inequality for the stochastic reaction diffusion equation with random initial values.
\begin{corollary}\label{Lrandom}
	Suppose the hypotheses {\rm({\bf H1})} and {\rm({\bf H2})} hold, and $\mu\in \mathcal{P}(L^2_{tem})$.Then
	\begin{align*}
		W_2(Q,P^{\mu})^2\leq c_1 H(Q|P^{\mu}),\quad Q\in \mathcal{P}(C([0,T],L^2_{tem}))
	\end{align*}
	holds for some constant $c_1>0$ if and only if
	\begin{align*}
		W_2(\nu, \mu)^2\leq \tilde{c}_1H(\nu|\mu),\quad \nu\in\mathcal{P}(C_{tem})
	\end{align*}
	holds for some constant $\tilde{c}_1>0$.
\end{corollary}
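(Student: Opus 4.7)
The plan is to follow the standard tensorization/contraction argument for transportation inequalities on path spaces, developed by Djellout--Guillin--Wu \cite{DGW} and used in the present context in \cite{WZ}. The two directions are independent: $(\Leftarrow)$ combines the fixed-initial-condition inequality from Theorem \ref{Lresult} with the assumed $T_2$ for $\mu$ via an entropy decomposition, while $(\Rightarrow)$ is the contraction of $T_2$ under a $1$-Lipschitz push-forward.

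For direction $(\Leftarrow)$, given $Q \in \mathcal{P}(C([0,T], L^2_{tem}))$ with $H(Q|P^\mu) < \infty$, let $\nu$ be the marginal of $Q$ at $t=0$; then $\nu \ll \mu$ and disintegrating $Q(d\varphi) = Q_x(d\varphi)\,\nu(dx)$ and $P^\mu(d\varphi) = P^x(d\varphi)\,\mu(dx)$ yields the additivity
\[
H(Q|P^\mu) = H(\nu|\mu) + \int H(Q_x | P^x)\,\nu(dx).
\]
I would insert the intermediate measure $P^\nu := \int P^x\,\nu(dx)$ and use $W_2(Q, P^\mu)^2 \leq 2 W_2(Q, P^\nu)^2 + 2 W_2(P^\nu, P^\mu)^2$. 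The first piece is controlled via convexity of $W_2^2$ and Theorem \ref{Lresult} (whose constant $C_T$ is uniform in the deterministic initial datum, as can be read off the constants in the proof):
\[
W_2(Q, P^\nu)^2 \leq \int W_2(Q_x, P^x)^2\,\nu(dx) \leq 2C_T \int H(Q_x|P^x)\,\nu(dx).
\]
For the second piece, take an optimal $W_2$-coupling $\pi$ of $(\nu,\mu)$ and couple two copies $(u^\xi, u^\eta)$ of \eqref{3.1} synchronously, i.e.\ driven by the same space-time white noise with $(\xi,\eta)\sim\pi$; this realizes a coupling of $(P^\nu, P^\mu)$ for which $W_2(P^\nu, P^\mu)^2 \leq \mathbb{E}[\tilde d(u^\xi, u^\eta)^2]$, where $\tilde d$ is the sup path metric. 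A Gronwall-type estimate on $u^\xi - u^\eta$ using {\bf (H1)}, {\bf (H2(b))} and the heat-kernel bounds of the appendix produces $\mathbb{E}[\tilde d(u^\xi, u^\eta)^2] \leq K_T\,\mathbb{E}[d(\xi,\eta)^2] = K_T W_2(\nu,\mu)^2 \leq K_T\tilde c_1 H(\nu|\mu)$. Summing the two pieces and using the entropy decomposition closes the inequality with an explicit $c_1 = c_1(C_T, K_T, \tilde c_1)$.

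For direction $(\Rightarrow)$, consider the evaluation $\pi_0 : C([0,T], L^2_{tem}) \to L^2_{tem}$, $\pi_0(\varphi):=\varphi(0)$, which is $1$-Lipschitz for the sup path metric and satisfies $(\pi_0)_\ast P^\mu = \mu$. For $\nu \ll \mu$, the measure $Q := P^\nu \in \mathcal{P}(C([0,T], L^2_{tem}))$ satisfies $(\pi_0)_\ast Q = \nu$ and $H(Q|P^\mu) = H(\nu|\mu)$ by a direct disintegration calculation. Since $W_2$ is non-increasing under $1$-Lipschitz push-forwards,
\[
W_2(\nu,\mu)^2 \leq W_2(Q, P^\mu)^2 \leq c_1 H(Q|P^\mu) = c_1 H(\nu|\mu),
\]
so one may take $\tilde c_1 = c_1$.

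The main technical obstacle is the synchronous-coupling estimate $\mathbb{E}[\tilde d(u^\xi, u^\eta)^2] \leq K_T \mathbb{E}[d(\xi,\eta)^2]$: one must bound the difference of two solutions uniformly across the seminorm parameters $\lambda = 1/n$ entering the definition of the $L^2_{tem}$ metric $\rho$, via heat-kernel estimates analogous to (but milder than) the stochastic-convolution moment bounds of Section 3. The computation is facilitated by the fact that the stochastic contribution to $u^\xi - u^\eta$ is proportional to $\sigma(u^\xi) - \sigma(u^\eta)$ and therefore is of the right order by {\bf (H2(b))}, so that the bound closes by a Gronwall argument weight-by-weight and the constants $K_T^{(n)}$ can be absorbed uniformly into the series that defines $\rho$.
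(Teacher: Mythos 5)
Your proposal is correct and takes essentially the same route as the paper: the paper reduces Corollary \ref{Lrandom} to Theorem 2.1 of \cite{WZ} together with the Wasserstein--Lipschitz continuity of the initial-value map $f\mapsto P^f$ (Proposition \ref{lip1}, proved by the synchronous coupling, Gronwall's inequality via Lemma \ref{moment2}, and the uniformity of the constants in the weights $\lambda=1/n$), and your argument is exactly the proof of that cited abstract equivalence (entropy disintegration, the triangle inequality through $P^{\nu}$, convexity of $W_2^2$, and the push-forward under the $1$-Lipschitz evaluation $\pi_0$ for the converse) combined with the same coupling estimate. The only delicate point you flag -- passing from the weight-by-weight bounds $\mathbb{E}\sup_{t\le T}\|u^f(t)-u^g(t)\|^2_{L^2_{1/n}}\le c\,\|f-g\|^2_{L^2_{1/n}}$ to a bound in the truncated series metric $\rho$ -- does close, but via Minkowski's inequality in $L^2(\Omega)$ applied to the series $\sum_n 2^{-n}\min\{1,\cdot\}$ (term-by-term expectation of the squared series does not compare to $\rho(f,g)^2$), which is the content of the paper's Proposition \ref{lip1}.
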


\begin{corollary}\label{LCtemrandom}
	Suppose the hypotheses {\rm({\bf H1})} and {\rm({\bf H2})} hold, and $\mu\in \mathcal{P}(L^2_{tem}\cap C_{tem})$.Then
	\begin{align*}
		W_2(Q,P^{\mu})^2\leq c_2 H(Q|P^{\mu}),\quad Q\in \mathcal{P}(C([0,T],L^2_{tem}\cap C_{tem}))
	\end{align*}
	holds for some constant $c_2>0$ if and only if
	\begin{align*}
		W_2(\nu, \mu)^2\leq \tilde{c}_2 H(\nu|\mu),\quad \nu\in\mathcal{P}(L^2_{tem}\cap C_{tem})
	\end{align*}
	holds for some constant $\tilde{c}_2>0$.
\end{corollary}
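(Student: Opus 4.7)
The proof will follow the template of \cite{WZ} for transferring the $T_2$ inequality between the initial distribution and the law of the associated Markov process, combined with Theorems \ref{Lresult} and \ref{Ctemresult}. Set $E:=L^2_{tem}\cap C_{tem}$, endowed with the Polish metric $d:=\rho+\varrho$, and write $P^x$ for the law on $C([0,T],E)$ of the solution of \eqref{3.1} started at $x\in E$, with $\tilde d$ the associated uniform metric on paths.

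For the ``only if'' direction, the evaluation $\pi_0:\varphi\mapsto\varphi(0)$ is $1$-Lipschitz from $(C([0,T],E),\tilde d)$ to $(E,d)$. Pushing any coupling of $(P^\nu,P^\mu)$ through $(\pi_0,\pi_0)$ therefore yields a coupling of $(\nu,\mu)$ of no larger quadratic cost, so $W_2(\nu,\mu)\le W_2(P^\nu,P^\mu)$. Since both $P^\nu$ and $P^\mu$ arise from the common Markov kernel $x\mapsto P^x$, the chain rule for relative entropy gives $H(P^\nu|P^\mu)=H(\nu|\mu)$. Chaining these two bounds produces the TCI for $\mu$ with $\tilde c_2=c_2$.

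For the ``if'' direction, assume $\mu$ satisfies $T_2(\tilde c_2)$ on $E$. Theorems \ref{Lresult} and \ref{Ctemresult} supply $T_2(c)$ for each $P^x$ on $C([0,T],E)$ with a constant $c$ independent of $x$, as the constants depend only on $T,L_b,L_\sigma,K_\sigma$. For any $Q\ll P^\mu$, disintegrate $Q(d\varphi)=Q^x(d\varphi)\,\nu(dx)$ with $\nu:=Q\circ\pi_0^{-1}\ll\mu$, so that
\begin{align*}
H(Q|P^\mu)=H(\nu|\mu)+\int_E H(Q^x|P^x)\,\nu(dx).
\end{align*}
Choose an optimal coupling $\Pi^{(0)}$ of $(\nu,\mu)$ and, conditionally on $(x,y)\sim\Pi^{(0)}$, a near-optimal coupling of $(Q^x,P^y)$. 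The triangle inequality
\begin{align*}
W_2(Q^x,P^y)\le W_2(Q^x,P^x)+W_2(P^x,P^y),
\end{align*}
combined with the uniform $T_2(c)$ applied to $Q^x$ and a pathwise stability bound $W_2(P^x,P^y)^2\le K\,d(x,y)^2$, together with $T_2(\tilde c_2)$ for $\mu$ and the elementary inequality $(a+b)^2\le(1+\varepsilon)a^2+(1+\varepsilon^{-1})b^2$, yields the claimed TCI for $P^\mu$ after integrating against $\Pi^{(0)}$ and optimising in $\varepsilon>0$.

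The main technical obstacle is the stability estimate $W_2(P^x,P^y)\le \sqrt K\,d(x,y)$ in the weighted path metric $\tilde d$. This is obtained by driving two copies of \eqref{3.1} with the same space-time white noise and running a Gronwall argument in each weight $e^{-\lambda|\cdot|}$, $\lambda=1/n$, using the moment estimates of Section 3 applied to the differenced stochastic convolution $\int p_{t-s}(\cdot,y)[\sigma(u^x(s,y))-\sigma(u^y(s,y))]\,W(ds,dy)$; the family of weighted estimates is then reassembled through the dyadic summation defining $\rho$ and $\varrho$.
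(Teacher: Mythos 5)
Your proposal is correct and follows essentially the same route as the paper: the paper likewise reduces the corollary to the uniform-in-$x$ TCI furnished by Theorems \ref{Lresult} and \ref{Ctemresult} together with the Lipschitz stability bound $W_2(P^f,P^g)\le \tilde c\,(\rho+\varrho)(f,g)$ proved by synchronous coupling, Gronwall and the moment estimates, and then cites Theorem 2.1 of \cite{WZ} for the transfer principle that you unfold by hand via disintegration of entropy, the triangle inequality and the $(1+\varepsilon)$ trick. The one point worth making explicit in your stability step is that the Gronwall loop for the weighted sup-norm does not close by itself --- the moment estimate for the supremum of the differenced stochastic convolution leaves an $L^2_\lambda$-type remainder, so the $L^2_{tem}$ stability must be proved first and fed in, which is precisely why the paper (see its remark before the final proposition) works on $L^2_{tem}\cap C_{tem}$ with the combined metric rather than on $C_{tem}$ alone.
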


\skip 0.5cm

Let $E=L^2_{tem}$or $E=C_{tem}$.
Let $\mu$ be the law of the random field solution $u(\cdot, \cdot)$ of SPDE (\ref{3.1}), viewed as a probability measure on $C([0, T], E)$. For the proof of Theorem \ref{Lresult} and \ref{Ctemresult}, we recall  a lemma describing the probability measure $\nu$ that is absolutely continuous with respect to $\mu$.

Let $\nu\ll \mu$ on $C([0, T],E)$.
Define a new probability measure $\mathbb{Q}$ on the filtered probability space $(\Omega, {\cal F}, \{{\cal F}_{t}\}_{0\leq t\leq T}, \mathbb{P})$ by
\begin{align}\label{add 0303.1}
	\mathrm{d}\mathbb{Q}:=\frac{\mathrm{d}\nu}{\mathrm{d}\mu}(u) \,\mathrm{d}\mathbb{P} .
\end{align}
Denote the Radon-Nikodym derivative restricted on ${\cal F}_t$ by
\[M_t:=\left. \frac{\mathrm{d}\mathbb{Q}}{\mathrm{d}\mathbb{P}}\right |_{{\cal F}_t}, \quad t\in [0, T].\]
Then $M_t, t\in [0, T]$ forms a $\mathbb{P}$-martingale.

The following Girsanov transformation lemma can be found in  \cite{KS}.
\begin{lemma}\label{CtemGirsa}
	There exists an adapted random field $h=\{h(s,x), (s,x)\in [0, T]\times \mathbb{R}\}$ such that $\mathbb{Q}\text{-a.s.}$ for all $t\in [0, T]$,
	\begin{align*}
		\int_{0}^{t}\int_{\mathbb{R}} | h(s,x)|^2 \,\mathrm{d}x\mathrm{d}s <\infty
	\end{align*}
	and
	\begin{align*}
		\widetilde{W}(\mathrm{d}t,\mathrm{d}x):=W(\mathrm{d}t,\mathrm{d}x)-h(t,x)\,\mathrm{d}x\mathrm{d}t,
	\end{align*}
	is a space-time white noise on $[0,T]\times\mathbb{R}$. Moreover,
	\begin{align*}
		M_t=\exp\left (\int_{0}^{t}\int_{\mathbb{R}} h(s,x)\,W(\mathrm{d}s,\mathrm{d}x)-\frac{1}{2}\int_{0}^{t}\int_{\mathbb{R}} | h(s,x)|^2\,\mathrm{d}x\mathrm{d}s\right ), \quad \mathbb{Q}\text{-a.s.},
	\end{align*}
	and
	\begin{align}\label{4.4}
		H(\nu|\mu)=\frac{1}{2}\mathbb{E}^{\mathbb{Q}}\left[\int_{0}^{t}\int_{\mathbb{R}} | h(s,x)|^2\,\mathrm{d}x\mathrm{d}s\right],
	\end{align}
	where $\mathbb{E}^{\mathbb{Q}}$ stands for the expectation under the measure $\mathbb{Q}$.
\end{lemma}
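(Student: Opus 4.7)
The plan is to follow the classical Girsanov strategy, adapted to the Walsh stochastic integral setting. Since the filtration $\{\mathcal{F}_t\}$ is generated by the space-time white noise $W$ and $u$ is $\mathcal{F}_T$-measurable, the random variable $\frac{d\nu}{d\mu}(u)$ is a non-negative $\mathcal{F}_T$-measurable density with $\mathbb{E}^{\mathbb{P}}[\frac{d\nu}{d\mu}(u)] = 1$. Hence $M_t := \mathbb{E}^{\mathbb{P}}[\frac{d\nu}{d\mu}(u)\mid\mathcal{F}_t]$ is a uniformly integrable non-negative $\mathbb{P}$-martingale, and $\mathbb{Q}$ is well-defined and mutually absolutely continuous with $\mathbb{P}$ on the event $\{M_T > 0\}$ (which has full $\mathbb{Q}$-measure).

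Next, I would invoke the martingale representation theorem for the filtration generated by space-time white noise to produce a predictable random field $\psi$ with
\[ M_t = 1 + \int_0^t\int_{\mathbb{R}} \psi(s,x)\,W(ds,dx),\qquad t\in[0,T]. \]
Since $M$ is strictly positive $\mathbb{Q}$-a.s., one may set $h(s,x) := \psi(s,x)/M_{s}$ on $\{M_s > 0\}$ and zero otherwise. Localizing by the stopping times $\tau_n := \inf\{t : M_t \leq 1/n\}\wedge T$ and applying It\^o's formula to $\log M_{t\wedge\tau_n}$ yields
\[ \log M_{t\wedge\tau_n} = \int_0^{t\wedge\tau_n}\!\!\int_{\mathbb{R}} h(s,x)\,W(ds,dx) - \tfrac{1}{2}\int_0^{t\wedge\tau_n}\!\!\int_{\mathbb{R}} |h(s,x)|^2\,dx\,ds. \]
Letting $n\to\infty$ (justified because $\tau_n\uparrow T$ $\mathbb{Q}$-a.s.\ by the strict positivity of $M$ under $\mathbb{Q}$) gives the desired exponential representation of $M_t$. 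Girsanov's theorem for space-time white noise, proved via the characteristic functional of Walsh's orthogonal martingale measures, then shows that $\widetilde{W}(dt,dx) := W(dt,dx) - h(t,x)\,dx\,dt$ is a space-time white noise under $\mathbb{Q}$.

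For the entropy identity I would write, via change of variables under the map $\omega\mapsto u(\omega)$ and the definition of $\mathbb{Q}$,
\[ H(\nu|\mu) = \int \log\tfrac{d\nu}{d\mu}\,d\nu = \mathbb{E}^{\mathbb{Q}}[\log M_T]. \]
Substituting $W = \widetilde{W} + h\,dx\,dt$ into the exponential representation yields
\[ \log M_T = \int_0^T\!\!\int_{\mathbb{R}} h(s,x)\,\widetilde{W}(ds,dx) + \tfrac{1}{2}\int_0^T\!\!\int_{\mathbb{R}} |h(s,x)|^2\,dx\,ds. \]
Taking $\mathbb{E}^{\mathbb{Q}}$ of both sides and showing that the expectation of the $\widetilde{W}$-stochastic integral vanishes gives \eqref{4.4}. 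The case $H(\nu|\mu) = \infty$ is trivial; otherwise, one uses the finiteness of $\mathbb{E}^{\mathbb{Q}}[\log M_T]$ together with a localization argument to pass to the limit.

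The main obstacle is the last step: the $\mathbb{Q}$-stochastic integral $\int_0^\cdot\!\int h\,d\widetilde{W}$ is only a $\mathbb{Q}$-local martingale, so its $\mathbb{Q}$-expectation need not vanish without further control on $\|h\|_{L^2}$. The resolution is to work with the sequence of stopping times $\tau_n$, apply Fatou's lemma to $\mathbb{E}^{\mathbb{Q}}[\int_0^{T\wedge\tau_n}\int|h|^2]$, and exploit $\mathbb{Q}$-integrability of $\log M_T$ (inherited from $H(\nu|\mu) < \infty$) to upgrade to the full identity. Everything else consists of relatively standard adaptations of the finite-dimensional Girsanov and martingale-representation machinery to Walsh's stochastic calculus.
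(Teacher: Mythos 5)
Your outline is correct and is essentially the standard argument: the paper does not prove this lemma itself but cites \cite{KS}, and the proof given there proceeds exactly as you describe (martingale representation for the white-noise filtration, $h=\psi/M$, It\^o's formula for $\log M$ with localization, Girsanov for Walsh's martingale measures, and the entropy identity $H(\nu|\mu)=\mathbb{E}^{\mathbb{Q}}[\log M_T]$ handled by stopping times and Fatou). Your identification of the vanishing of the $\widetilde{W}$-integral's $\mathbb{Q}$-expectation as the delicate point, and its resolution via localization, matches the cited treatment.
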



\section{Moment estimates}
In the proof of the main results, the moments estimates (see Lemma \ref{moment1} and \ref{moment2}) for stochastic convolution against space-time white noise will play a crucial role. Before we give the estimates, we prove the Burkholder-type inequality for Hilbert-space-valued stochastic integral against space-time white noise.

\vskip 0.5cm
We present some estimates associated with the heat kernal $p_t(x,y)=\frac{1}{\sqrt{2\pi t}}e^{-\frac{(x-y)^2}{2t}}$, whose proofs are straightforward so we omit them here. For any $x\in \mathbb{R}$, $t>0$ and $\eta \in \mathbb{R}$,
\begin{align}
	&\int_{\mathbb{R}} p_{t}(x, y) e^{\eta|y|} \mathrm{d} y \leq 2 e^{\frac{\eta^{2} t}{2}} e^{\eta|x|},\label{103.1}\\
	&\int_{\mathbb{R}} p_{t}(x, y)^2 e^{\eta|y|} \mathrm{d} y \leq \frac{1}{\sqrt{\pi t}} e^{\frac{\eta^{2} t}{4}} e^{\eta|x|}.\label{103.2}
\end{align}
For a function $f\in L^2_\lambda$, $\lambda>0,\ t>0$,
\begin{align}
	\label{kernal} \left\Vert P_tf\right\Vert_{L^2_{\lambda}} \leq \sqrt{2}e^{\lambda^2 t}\Vert f\Vert_{L^2_{\lambda}}.
\end{align}

\begin{lemma}[Burkholder-Davis-Gundy's inequality]\label{BDG}
	Let $H$ be a Hilbert space with inner product and norm denoted by $\langle \cdot,\cdot\rangle_H$ and $|\cdot|_H$, respectively. $\Phi(t,x)$ is a $H$-valued adapted random field such that the following stochastic integral with respect to the space-time white noise is well-defined.  Assume that for $p>0$,
	\begin{align*}
		\mathbb{E}\left(\int_{0}^{t}\int_{\mathbb{R}}|\Phi(r,z)|_H^2\mathrm{d}z\mathrm{d}r\right)^{\frac{p}{2}}<\infty.
	\end{align*}
	Then there exists a constant $c_p>0$ such that for $t>0$,
	\begin{align*}
		\mathbb{E}\sup_{s\in[0, t]} \left|\int_{0}^{s}\int_{\mathbb{R}}\Phi(r,z)W(\mathrm{d}r,\mathrm{d}z)\right|_H^p\leq c_p~ \mathbb{E}\left(\int_{0}^{t}\int_{\mathbb{R}}|\Phi(r,z)|_H^2\mathrm{d}z\mathrm{d}r\right)^{\frac{p}{2}}.
	\end{align*}	
\end{lemma}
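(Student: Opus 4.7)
The strategy is to reduce the Hilbert-valued Burkholder--Davis--Gundy inequality to the classical scalar BDG by expanding along an orthonormal basis of $H$, applying It\^o's formula to the norm-squared of finite-dimensional truncations, and passing to the limit by monotone convergence.

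Fix an orthonormal basis $\{e_k\}_{k\geq 1}$ of $H$ and put $\Phi_k(r,z) := \langle \Phi(r,z), e_k\rangle_H$ together with $M_k(s) := \int_0^s\!\int_{\mathbb{R}} \Phi_k(r,z)\,W(\mathrm{d}r,\mathrm{d}z)$. Each $\Phi_k$ is a scalar predictable random field and each $M_k$ is a continuous martingale with $[M_k]_s = \int_0^s\!\int_{\mathbb{R}} \Phi_k(r,z)^2\,\mathrm{d}z\mathrm{d}r$. By the construction of the $H$-valued integral and Parseval's identity,
\[
M_s := \int_0^s\!\int_{\mathbb{R}} \Phi(r,z)\,W(\mathrm{d}r,\mathrm{d}z) = \sum_{k} M_k(s)\, e_k,\quad |M_s|_H^2 = \sum_k M_k(s)^2,\quad |\Phi(r,z)|_H^2 = \sum_k \Phi_k(r,z)^2.
\]
For the finite projections $M^N_s := \sum_{k=1}^N M_k(s)\, e_k$, It\^o's formula applied termwise to $\sum_{k=1}^N M_k(s)^2$ yields
\[
|M^N_s|_H^2 = 2\int_0^s\!\int_{\mathbb{R}} \langle M^N_r,\, \Phi(r,z)\rangle_H\, W(\mathrm{d}r,\mathrm{d}z) + A^N_s,\quad A^N_s := \int_0^s\!\int_{\mathbb{R}} \sum_{k=1}^N \Phi_k(r,z)^2\,\mathrm{d}z\mathrm{d}r,
\]
so $N^N_s := |M^N_s|_H^2 - A^N_s$ is a real continuous martingale whose quadratic variation is bounded, via Cauchy--Schwarz, by $[N^N]_s \leq 4(M^{N*}_s)^2 A^N_s$ with $M^{N*}_s := \sup_{r\leq s}|M^N_r|_H$.

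Applying the classical scalar BDG (valid for all $p>0$) to $N^N$ gives $\mathbb{E}\sup_{s\leq t}|N^N_s|^{p/2} \leq \tilde{c}_p\, \mathbb{E}\bigl[(M^{N*}_t)^{p/2} (A^N_t)^{p/4}\bigr]$. Combined with $(M^{N*}_t)^2 \leq \sup_{s\leq t}|N^N_s| + A^N_t$, the elementary bound $(x+y)^{p/2} \leq C_p(x^{p/2}+y^{p/2})$ (which holds for all $p>0$), and Cauchy--Schwarz on the mixed expectation, one arrives at a quadratic-type inequality
\[
\mathbb{E}(M^{N*}_t)^p \leq \alpha_p\,\bigl(\mathbb{E}(M^{N*}_t)^p\bigr)^{1/2}\bigl(\mathbb{E}(A^N_t)^{p/2}\bigr)^{1/2} + \beta_p\,\mathbb{E}(A^N_t)^{p/2}
\]
which solves to $\mathbb{E}(M^{N*}_t)^p \leq c_p\, \mathbb{E}(A^N_t)^{p/2}$. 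Since $A^N_t \nearrow \int_0^t\!\int_{\mathbb{R}}|\Phi(r,z)|_H^2\,\mathrm{d}z\mathrm{d}r$ and $|M^N_s|_H \nearrow |M_s|_H$ as $N\to\infty$, monotone convergence on the right-hand side and Fatou's lemma on the left deliver the claimed bound.

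The main obstacle is the absorption in the quadratic inequality, which presupposes that $\mathbb{E}(M^{N*}_t)^p$ is finite. This is addressed by a routine localization through the stopping times $\tau_M := \inf\{s: |M^N_s|_H \geq M\}$, which tend to $\infty$ $\mathbb{P}$-a.s.\ by path continuity of $M^N$: one proves the inequality for the stopped process $M^N_{\cdot\wedge\tau_M}$ (on which $M^{N*}_{t\wedge\tau_M}\leq M$ trivially) and then sends $M\to\infty$. A pleasant feature of this approach is that the same argument works uniformly for all $p>0$, so there is no need to separate the sub-quadratic range $0<p<2$ from the range $p\geq 2$.
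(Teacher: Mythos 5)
Your proof is correct, but it takes a genuinely different route from the paper's. The paper does not prove the inequality from scratch: it identifies the Walsh integral with an It\^o integral against an $L^2(\mathbb{R})$-cylindrical Wiener process --- checking that the induced operator $[\Phi(t)](f)=\int_{\mathbb{R}}\Phi(t,x)f(x)\,\mathrm{d}x$ is Hilbert--Schmidt with $|\Phi(t)|^2_{L_2(L^2(\mathbb{R}),H)}=\int_{\mathbb{R}}|\Phi(t,x)|^2_H\,\mathrm{d}x$ --- and then simply invokes the known infinite-dimensional Burkholder--Davis--Gundy inequality (Theorem 4.36 in Da Prato--Zabczyk). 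You instead give a self-contained derivation from the scalar BDG inequality: orthonormal expansion, It\^o's formula for the truncated norm-squared, the bound $[N^N]_s\le 4(M^{N*}_s)^2A^N_s$ via Cauchy--Schwarz in the projected space, absorption of $\mathbb{E}(M^{N*}_t)^p$ after localization, and monotone convergence in $N$. This is essentially the standard proof of the Hilbert-space BDG inequality that underlies the cited theorem, so you are re-deriving the key external input rather than quoting it. What the paper's route buys is brevity and a constant inherited from the reference; what yours buys is independence from the cylindrical-Wiener-process formalism and a single argument covering all $p>0$ without a case split at $p=2$. The delicate points in your write-up --- finiteness of $\mathbb{E}(A^N_t)^{p/2}$ from the hypothesis, the stopping-time localization before solving the quadratic inequality, and the use of the projection so that only $A^N$ rather than the full quadratic variation appears --- are all handled correctly; the only step taken on faith is the Parseval identity $|M_s|_H^2=\sum_k M_k(s)^2$ for the limit integral, which is legitimately part of the assumed well-definedness of the $H$-valued integral.
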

\begin{proof}
	It is known (see \cite[Chapter 4]{DZ}, also \cite{DS}) that Walsh's stochastic integral against the space-time white noise can be formulated as an It$\mathrm{\hat{o}}$ integral w.r.t. a cylindrical Wiener process. Let us make it precise in the current setting.
	
	For $t\geq 0$, define an operator $\Phi(t):L^2(\mathbb{R})\rightarrow H$ by
	\begin{align*}
		[\Phi(t)](f):=\int_{\mathbb{R}} \Phi(t,x)f(x)\mathrm{d}x,
	\end{align*}
	for $f\in L^2(\mathbb{R})$. Then,  $\Phi(t)$ is a Hilbert-Schmidt operator, i.e. $\Phi(t)\in L_2(L^2(\mathbb{R}),H)$ and
	$$|\Phi(t)|_{L_2(L^2(\mathbb{R}),H)}^2=\int_{\mathbb{R}}|\Phi(t,x)|_H^2\mathrm{d}x.$$
	Indeed, let $\{e_n\}_{n\geq 1}$ be an orthonormal basis of $L^2(\mathbb{R})$, and if $\{g_j\}_{j\geq 1}$ is an orthonormal basis of $H$ we have that
	\begin{align*}
		|\Phi(s)|_{L_2(L^2(\mathbb{R}),H)}^2=&\sum_{n=1}^{\infty}|\Phi(s)e_n|_H^2 \\
		= & \sum_{n=1}^{\infty}\left|\int_{\mathbb{R}}\Phi(s,y)e_n(y)\mathrm{d}y \right|_H^2 \\
		= & \sum_{n=1}^{\infty}\sum_{j=1}^{\infty} \left\langle \int_{\mathbb{R}}\Phi(s,y)e_n(y)\mathrm{d}y ,g_j\right\rangle_H^2 \\
		= & \sum_{j=1}^{\infty}\sum_{n=1}^{\infty} \left(\int_{\mathbb{R}} \langle\Phi(s,y),g_j\rangle_H ~e_n(y)\mathrm{d}y\right)^2 \\
		= & \sum_{j=1}^{\infty} \int_{\mathbb{R}}\langle\Phi(s,y),g_j\rangle_H^2\mathrm{d}y \\
		= & \int_{\mathbb{R}} |\Phi(s,y)|_H^2\mathrm{d}y.
	\end{align*}
	
	Now,  for $r\geq 0$, we expand $\Phi(r,\cdot)\in L^2(\mathbb{R})$ in the basis $\{e_n\}_{n\geq 1}$ and write
	\begin{align*}
		&\int_{0}^{s}\int_{\mathbb{R}}\Phi(r,z)W(\mathrm{d}r,\mathrm{d}z) \\
		= & \int_{0}^{s}\int_{\mathbb{R}} \sum_{n=1}^{\infty} \langle \Phi(r,\cdot),e_n\rangle_{L^2(\mathbb{R})} e_n(z)W(\mathrm{d}r,\mathrm{d}z) \\
		= & \int_{0}^{s} \sum_{n=1}^{\infty} \langle \Phi(r,\cdot),e_n\rangle_{L^2(\mathbb{R})} \mathrm{d}\beta_n(r) \\
		= & \int_{0}^{s} \sum_{n=1}^{\infty} \Phi(r)e_n \,\mathrm{d}\beta_n(r) \\
		= & \int_{0}^{s} \Phi(r)\mathrm{d} W_r,
	\end{align*}
	where $\beta_n(t):=\int_{0}^{t}\int_{\mathbb{R}}e_n(z)W(\mathrm{d}r,\mathrm{d}z),\ t\geq 0,n\geq 1$ are independent $1$-dimensional Brownian motions, and $W_r:=\sum_{n=1}^{\infty} e_n \beta_n(r),\ r\geq 0$ is a $L^2(\mathbb{R})$-cylindrical Wiener process.
	
	According to \cite[Theorem 4.36]{DZ}, there exists a constant $c_p$ such that
	\begin{align*}
		&\mathbb{E}\sup_{s\in[0, t]} \left|\int_{0}^{s}\int_{\mathbb{R}}\Phi(r,z)W(\mathrm{d}r,\mathrm{d}z)\right|_H^p \\
		=&\mathbb{E}\sup_{s \in[0, t]}\left|\int_{0}^{s} \Phi(r)\mathrm{d} W_r\right|_H^p \\
		\leq & c_p~ \mathbb{E}\left(\int_{0}^{t}|\Phi(s)|_{L_2(L^2(\mathbb{R}),H)}^2\mathrm{d}s\right)^{\frac{p}{2}} \\
		= &c_p~\mathbb{E}\left(\int_{0}^{t}\int_{\mathbb{R}} |\Phi(s,y)|_H^2\mathrm{d}y\mathrm{d}s\right)^{\frac{p}{2}}.
	\end{align*}
\end{proof}

\vskip 0.5cm
Recall the Hilbert space $L^2_{\lambda}$ defined in Section \ref{S:2} with inner product $\langle\cdot,\cdot\rangle_{L^2_{\lambda}}$ and induced norm $\Vert\cdot\Vert_{L^2_{\lambda}}$.

\begin{lemma}\label{moment1}
	Let $\{\sigma(s,y):(s, y)\in [0,T]\times \mathbb{R}\}$ be an adapted random field such that the following stochastic convolution with respect to the space-time white noise is well defined and $\sigma(t,\cdot)\in L^2_{\lambda}$ for  every $t$, $\mathbb{P}$-a.s. Then for any $p>8,\ T>0$, there exists a constant $C_{\lambda,T,p}$ increasing w.r.t. $\lambda$  such that
	\begin{align}\label{mm1}
		&\mathbb{E} \sup_{t \leq T}\left\{ \int_{\mathbb{R}}\left|\int_{0}^{t} \int_{\mathbb{R}} p_{t-s}(x, y) \sigma(s, y) W(\mathrm{d}s,\mathrm{d}y)\right|^2 e^{-2\lambda|x|} \mathrm{d}x\right\}^{\frac{p}{2}} \nonumber\\
		\leq & C_{\lambda, T,p} ~\mathbb{E} \int_{0}^{T}\Vert\sigma(r,\cdot)\Vert_{L^2_{\lambda}}^p \mathrm{d}r.
	\end{align}
\end{lemma}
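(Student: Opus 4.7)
The plan is to combine the Da Prato--Kwapie\'n--Zabczyk factorization method with the Hilbert-space-valued Burkholder inequality (Lemma \ref{BDG}) applied with $H=L^2_\lambda$. For a parameter $\alpha\in(0,1/2)$ to be chosen, the identity $B(1-\alpha,\alpha)=\pi/\sin(\pi\alpha)$ and the stochastic Fubini theorem yield the factorization
\begin{equation*}
I(t,x):=\int_0^t\int_{\mathbb{R}}p_{t-s}(x,y)\sigma(s,y)\,W(\mathrm{d}s,\mathrm{d}y)=\frac{\sin(\pi\alpha)}{\pi}\int_0^t(t-s)^{\alpha-1}[P_{t-s}Y_\alpha(s,\cdot)](x)\,\mathrm{d}s,
\end{equation*}
where $Y_\alpha(s,y):=\int_0^s\int_{\mathbb{R}}(s-r)^{-\alpha}p_{s-r}(y,z)\sigma(r,z)\,W(\mathrm{d}r,\mathrm{d}z)$.

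First I would bound the outer time integral deterministically: Minkowski's inequality for $\|\cdot\|_{L^2_\lambda}$, H\"older's inequality in $s$ with exponents $q'=p/(p-1)$ and $p$, together with the semigroup estimate \eqref{kernal}, yield
\begin{equation*}
\sup_{t\leq T}\|I(t)\|_{L^2_\lambda}^p\leq C_{\alpha,\lambda,T,p}\int_0^T\|Y_\alpha(s)\|_{L^2_\lambda}^p\,\mathrm{d}s,
\end{equation*}
provided $(\alpha-1)q'+1>0$, i.e.\ $\alpha>1/p$. Since the right-hand side is $t$-independent, the supremum passes inside the expectation.

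Next I would estimate $\mathbb{E}\|Y_\alpha(s)\|_{L^2_\lambda}^p$ by viewing $Y_\alpha(s,\cdot)$ as the $L^2_\lambda$-valued stochastic integral of $\Phi(r,z)(x):=(s-r)^{-\alpha}p_{s-r}(x,z)\sigma(r,z)$ and applying Lemma \ref{BDG}. The key technical input is the weighted heat-kernel estimate
\begin{equation*}
\int_{\mathbb{R}}p_t(x,z)^2 e^{-2\lambda|x|}\,\mathrm{d}x\leq\frac{e^{\lambda^2 t}}{\sqrt{\pi t}}\,e^{-2\lambda|z|},
\end{equation*}
obtained by transferring the weight via $e^{-2\lambda|x|}\leq e^{-2\lambda|z|}e^{2\lambda|x-z|}$ and invoking \eqref{103.1}. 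This delivers
\begin{equation*}
\int_{\mathbb{R}}\|\Phi(r,z)\|_{L^2_\lambda}^2\,\mathrm{d}z\leq\frac{e^{\lambda^2(s-r)}}{\sqrt{\pi}\,(s-r)^{2\alpha+1/2}}\|\sigma(r,\cdot)\|_{L^2_\lambda}^2,
\end{equation*}
integrable on $[0,s]$ precisely when $\alpha<1/4$. A Jensen step against the finite measure $(s-r)^{-2\alpha-1/2}\mathrm{d}r$ (valid since $p\geq 2$) then yields
\begin{equation*}
\mathbb{E}\|Y_\alpha(s)\|_{L^2_\lambda}^p\leq C\int_0^s(s-r)^{-2\alpha-1/2}\mathbb{E}\|\sigma(r,\cdot)\|_{L^2_\lambda}^p\,\mathrm{d}r,
\end{equation*}
and plugging into the previous display and applying Fubini's theorem absorbs the remaining time singularity, giving \eqref{mm1}.

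The main obstacle, and the place where the restriction on $p$ enters, is the simultaneous need $1/p<\alpha<1/4$ combined with the requirement that the explicit constant $C_{\lambda,T,p}$ depend on $\lambda$ only through a factor of the form $e^{c\lambda^2 T}$ (so that it is monotone in $\lambda$ as claimed). The naive argument above already works for $p>4$; the sharper quantitative form cited in the commented block---with denominator $(p-8)^{3p/2-2}$---reflects a more delicate choice $\alpha=\alpha(p)$ balancing the H\"older and Jensen contributions, which is only feasible once $p>8$.
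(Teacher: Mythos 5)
Your proposal follows essentially the same route as the paper: the factorization identity, the Hilbert-space Burkholder--Davis--Gundy inequality (Lemma \ref{BDG}) with $H=L^2_\lambda$, Minkowski/H\"older plus \eqref{kernal} for the outer deterministic integral, and the weighted heat-kernel bound \eqref{103.2} for the inner one. The only divergence is the treatment of the $\tfrac p2$-power of the inner time integral, where you apply Jensen against the finite measure $(s-r)^{-2\alpha-1/2}\,\mathrm{d}r$ (requiring only $\alpha<\tfrac14$, hence $p>4$) whereas the paper applies H\"older with exponents $\tfrac{p}{p-2}$ and $\tfrac p2$ (requiring $\alpha<\tfrac14-\tfrac1p$, which combined with $\alpha>\tfrac1p$ is exactly where $p>8$ enters) --- so your closing speculation about the origin of the $p>8$ threshold does not match the paper's actual reason, but your argument is correct for the stated range and in fact slightly sharper.
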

\begin{proof}
	We adopt the factorization method(see e.g.\cite{DZ}). For $0<\alpha<\frac{1}{8}$, set
	\begin{align}
		J_\sigma(s,y):&=\int_{0}^{s}\int_{\mathbb{R}}(s-r)^{-\alpha}p_{s-r}(y,z)\sigma(r,z)W(\mathrm{d}r,\mathrm{d}z),\label{factor1} \\
		J^{\alpha-1}f(t,x):&=\frac{\sin (\pi \alpha)}{\pi} \int_{0}^{t}\int_{\mathbb{R}}(t-s)^{\alpha-1}p_{t-s}(x,y)f(s,y)\mathrm{d}s\mathrm{d}y.\label{factor2}
	\end{align}
	Then we have
	\begin{equation}\label{mm11}
		\int_{0}^{t} \int_{\mathbb{R}} p_{t-s}(x, y) \sigma(s, y) W(\mathrm{d}s,\mathrm{d}y)=J^{\alpha-1}\left(J_\alpha \sigma\right)(t,x).
	\end{equation}
	Now we consider the left-hand side of \eqref{mm1}. Using (\ref{mm11}), Minkowski's  inequality, \eqref{kernal} and H\"{o}lder's inequality, we have for $p>8$
	\begin{align}\label{mm12}
		&\mathbb{E}~\sup_{t\leq T} \left\{\int_{\mathbb{R}}\left|\int_{0}^{t} \int_{\mathbb{R}} p_{t-s}(x, y) \sigma(s, y) W(\mathrm{d}s,\mathrm{d}y)\right|^2e^{-2\lambda |x|}\mathrm{d}x\right\}^\frac{p}{2} \nonumber\\
		= & \mathbb{E}~\sup_{t\leq T} \left\Vert\int_{0}^{t} \int_{\mathbb{R}} p_{t-s}(\cdot, y) \sigma(s, y) W(\mathrm{d}s,\mathrm{d}y)\right\Vert_{L^2_\lambda}^p \nonumber\\
		\leq & \left|\frac{\sin (\pi \alpha)}{\pi}\right|^p\mathbb{E}~\sup_{t\leq T} \left\Vert\int_{0}^{t} \int_{\mathbb{R}} (t-s)^{\alpha-1}p_{t-s}(\cdot, y) J_\alpha\sigma(s, y) \mathrm{d}y\mathrm{d}s\right\Vert_{L^2_\lambda}^p \nonumber\\
		\leq & \pi^{-p}~\mathbb{E}~\sup_{t\leq T}\left\{ \int_{0}^{t}(t-s)^{\alpha-1}\left\Vert \int_{\mathbb{R}}p_{t-s}(\cdot, y) J_\alpha\sigma(s,y)\mathrm{d}y\right\Vert_{L^2_\lambda}\mathrm{d}s\right\}^p \nonumber\\
		\leq & \pi^{-p} 2^{\frac{p}{2}}e^{p\lambda^2 T}~\mathbb{E}~\sup_{t \leq T}\left\{\int_{0}^{t}(t-s)^{\alpha-1} \Vert J_\alpha\sigma(s,\cdot)\Vert_{L^2_\lambda}\mathrm{d}s\right\}^p \nonumber\\
		\leq & \pi^{-p} 2^{\frac{p}{2}}e^{p\lambda^2 T} ~ \left(\int_{0}^{T} s^{(\alpha-1)\frac{p}{p-1}}\mathrm{d}s\right)^{p-1}  \int_{0}^{T}\mathbb{E}\left\Vert J_\alpha \sigma(s,\cdot)\right\Vert_{L^2_{\lambda}}^p \mathrm{d}s \nonumber\\
		\leq & C'_{\lambda,T,p}  \int_{0}^{T}\mathbb{E}\left\Vert J_\alpha \sigma(s,\cdot)\right\Vert_{L^2_{\lambda}}^p \mathrm{d}s,
	\end{align}
	where the condition $\alpha>\frac{1}{p}$ is used for a finite integral.
	By \eqref{factor1} and Lemma \ref{BDG} we have
	\begin{align*}		
		&\mathbb{E}\Vert J_\alpha \sigma(s,\cdot)\Vert_{L^2_{\lambda}}^p \nonumber\\
		= &\mathbb{E} \left\Vert\int_{0}^{s}\int_{\mathbb{R}}(s-r)^{-\alpha}p_{s-r}(\cdot,z)\sigma(r,z)W(\mathrm{d}r,\mathrm{d}z)\right\Vert_{L^2_{\lambda}}^p\nonumber\\
		\leq &  c_p \mathbb{E}\left(\int_0^s\int_{\mathbb{R}} (s-r)^{-2 \alpha}\Vert p_{s-r}(\cdot,z)\sigma(r,z)\Vert^2_{L^2_{\lambda}} \mathrm{d}z\mathrm{d} r\right)^{\frac{p}{2}}.
	\end{align*}
	With this estimate, \eqref{mm12} is bounded by
	\begin{align}\label{mm13}
		&\mathbb{E}~\sup_{t\leq T} \left\{\int_{\mathbb{R}}\left|\int_{0}^{t} \int_{\mathbb{R}} p_{t-s}(x, y) \sigma(s, y) W(\mathrm{d}s,\mathrm{d}y)\right|^2e^{-2\lambda |x|}\mathrm{d}x\right\}^\frac{p}{2} \nonumber\\
		\leq & C''_{\lambda,T,p}~\int_{0}^{T}\mathbb{E}\left(\int_0^s\int_{\mathbb{R}}(s-r)^{-2 \alpha}\Vert p_{s-r}(\cdot,z)\sigma(r,z)\Vert^2_{L^2_{\lambda}}\mathrm{d}z \mathrm{d} r\right)^{\frac{p}{2}}\mathrm{d}s.
	\end{align}
	
	We continue to estimate the right-hand side of \eqref{mm13}. By \eqref{103.2} and H\"{o}lder's inequality we have
	\begin{align}\label{mm14}
		&\int_{0}^{T}\mathbb{E}\left(\int_0^s\int_{\mathbb{R}}(s-r)^{-2 \alpha}\Vert p_{s-r}(\cdot,z)\sigma(r,z)\Vert^2_{L^2_{\lambda}}\mathrm{d}z\mathrm{d} r\right)^{\frac{p}{2}}\mathrm{d}s \nonumber\\
		= & \int_{0}^{T}\mathbb{E}\left(\int_0^s\int_{\mathbb{R}}(s-r)^{-2 \alpha}\int_{\mathbb{R}}| p_{s-r}(y,z)\sigma(r,z)|^2e^{-2\lambda|y|}\mathrm{d}y\,\mathrm{d}z\mathrm{d} r\right)^{\frac{p}{2}}\mathrm{d}s   \nonumber\\
		\leq & \mathbb{E}\int_{0}^{T}  \left(\int_{0}^{s}\int_{\mathbb{R}}(s-r)^{-2\alpha-\frac{1}{2}} \pi ^{-\frac{1}{2}}e^{\lambda^2s}e^{-2\lambda|z|} |\sigma(r,z)|^2\mathrm{d}z\mathrm{d}r\right)^{\frac{p}{2}}\mathrm{d}s \nonumber\\
		\leq & \pi^{-\frac{p}{4}}e^{\frac{p}{2}\lambda^2T}~\mathbb{E}\int_{0}^{T}  \left(\int_{0}^{s}(s-r)^{-(2\alpha+\frac{1}{2})\frac{p}{p-2}}\mathrm{d}r\right)^{\frac{p-2}{2}} \left(\int_{0}^{s}\Vert\sigma(r,\cdot)\Vert^{2\times\frac{p}{2}}_{L^2_{\lambda}}\mathrm{d}r\right) \mathrm{d}s \nonumber\\
		\leq&2^{-\frac{p-2}{2}}\pi^{-\frac{p}{4}}e^{\frac{p}{2}\lambda^2 T} \left(\frac{p-2}{p(\frac{1}{4}-\alpha)-1}\right)^{\frac{p-2}{2}} ~\mathbb{E}\int_{0}^{T}s^{p(\frac{1}{4}-\alpha)-1} \int_{0}^{s}\Vert\sigma(r,\cdot)\Vert_{L^2_{\lambda}}^p \mathrm{d}r \,\mathrm{d}s \nonumber\\
		\leq & 2^{-\frac{p-2}{2}}\pi^{-\frac{p}{4}}e^{\frac{p}{2}\lambda^2 T} T^{p(\frac{1}{4}-\alpha)} \left(\frac{p-2}{p(\frac{1}{4}-\alpha)-1}\right)^{\frac{p-2}{2}} \frac{1}{p(\frac{1}{4}-\alpha)}~\mathbb{E}\int_{0}^{T}\Vert\sigma(r,\cdot)\Vert_{L^2_{\lambda}}^p \mathrm{d}r\nonumber\\
		=&:C'''_{\lambda,T,p}~\mathbb{E} \int_{0}^{T}\Vert\sigma(r,\cdot)\Vert_{L^2_{\lambda}}^p \mathrm{d} r.
	\end{align}
	In the last step we changed the order of integration, and the condition $\alpha<\frac{1}{4}-\frac{1}{p}$ is used.
	
	Let
	\begin{align*}
		C_{\lambda,T,p}:=\min_{\frac{1}{p}<\alpha<\frac{1}{4}-\frac{1}{p}} C''_{\lambda,T,p}\cdot C'''_{\lambda,T,p}.
	\end{align*}
	Then \eqref{mm1} follows from \eqref{mm13} and \eqref{mm14}.
\end{proof}

\begin{lemma}\label{moment2}
	Let $\{\sigma(s,y):(s, y)\in [0,T]\times \mathbb{R}\}$ be an adapted random field such that the following stochastic convolution with respect to the space-time white noise is well defined and $\sigma(t,\cdot)\in L^2_{\lambda}$ for  every $t$, $\mathbb{P}$-a.s.. Then for any $0<p\leq 8,\ T,\epsilon>0$, there exists a constant $C_{\epsilon,\lambda,T,p}$  increasing w.r.t. $\lambda$ such that
	\begin{align*}
		&\mathbb{E} \sup_{t \leq T}\left\{ \int_{\mathbb{R}}\left|\int_{0}^{t} \int_{\mathbb{R}} p_{t-s}(x, y) \sigma(s, y) W(\mathrm{d}s,\mathrm{d}y)\right|^2 e^{-2\lambda|x|} \mathrm{d}x\right\}^{\frac{p}{2}}	\nonumber\\
		\leq &\epsilon~\mathbb{E} \sup_{t\leq T}\Vert\sigma(t,\cdot)\Vert_{L^2_{\lambda}}^p+C_{\epsilon,\lambda,T,p}~\mathbb{E} \int_{0}^{T}\Vert\sigma(t,\cdot)\Vert_{L^2_{\lambda}}^p \mathrm{d}t,
	\end{align*}
\end{lemma}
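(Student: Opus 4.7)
The plan is to invoke Lemma~\ref{moment1} at an auxiliary exponent $q > 8$ (a free parameter, with the final constant obtained by taking the infimum over $q > 8$), and then to trade moments via Lyapunov's inequality and a Young's inequality calibrated to produce the free parameter $\epsilon$.

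Concretely, I first fix $q > 8$ and apply Lemma~\ref{moment1} to obtain $\mathbb{E}\sup_{t \leq T}\|\mathrm{conv}(t,\cdot)\|_{L^2_\lambda}^q \leq C_{\lambda,T,q}\,\mathbb{E}\int_0^T \|\sigma(s,\cdot)\|_{L^2_\lambda}^q\,\mathrm{d}s$, where $\mathrm{conv}$ denotes the stochastic convolution appearing in the statement. The pointwise factorisation $\|\sigma(s,\cdot)\|_{L^2_\lambda}^q \leq \bigl(\sup_{t\leq T}\|\sigma(t,\cdot)\|_{L^2_\lambda}\bigr)^{q-p}\cdot \|\sigma(s,\cdot)\|_{L^2_\lambda}^p$ rewrites the right-hand side as $C_{\lambda,T,q}\,\mathbb{E}\bigl[\sup_{t\leq T}\|\sigma(t,\cdot)\|_{L^2_\lambda}^{q-p}\cdot V\bigr]$, where $V := \int_0^T \|\sigma(s,\cdot)\|_{L^2_\lambda}^p\,\mathrm{d}s$. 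Next I use Lyapunov's inequality $(\mathbb{E}X^p)^{q/p} \leq \mathbb{E}X^q$ with $X := \sup_{t\leq T}\|\mathrm{conv}(t,\cdot)\|_{L^2_\lambda}$ to convert the estimate into one on the $(q/p)$-th power of the targeted $p$-th moment.

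The final step is to apply Young's inequality with the conjugate pair $q/p$ and $q/(q-p)$, with an auxiliary parameter chosen as a suitable power of $\epsilon$. Applied to the random product $\sup_{t\leq T}\|\sigma(t,\cdot)\|^{q-p}\cdot V$, together with the elementary bound $V \leq T\sup_{t\leq T}\|\sigma(t,\cdot)\|^p$ used to dispose of the higher-moment term $V^{q/p}$ that arises, Young's inequality splits the right-hand side into a piece proportional to $\mathbb{E}\sup_{t\leq T}\|\sigma(t,\cdot)\|_{L^2_\lambda}^p$ (which gets the $\epsilon$ prefactor) and a piece proportional to $\mathbb{E}\int_0^T\|\sigma(s,\cdot)\|_{L^2_\lambda}^p\,\mathrm{d}s$. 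Taking the $p/q$-th root of the resulting sum, using subadditivity $(a+b)^{p/q}\leq a^{p/q}+b^{p/q}$ (valid since $p/q\leq 1$), and optimising over $q > 8$, produces the desired inequality with an explicit constant matching $C_{\epsilon,\lambda,T,p}=\inf_{q>8}\{\tfrac{p}{q-p}q^{-q/p}\epsilon^{1-q/p}(q-p+qC_{\lambda,T,q})^{q/p}\}$.

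The main obstacle is the delicate interplay between Lyapunov's inequality (which forces a $p/q$-th root onto a product) and Young's inequality (which must split the product into exactly the right two quantities). A naive ordering produces terms such as $\mathbb{E}\sup\|\sigma\|^q$ or $\mathbb{E}V^{q/p}$ on the right-hand side, i.e.\ strictly higher moments than what the statement allows; the crucial point is to combine Young with the trivial bound $V \leq T\sup\|\sigma\|^p$ (and then take the $p/q$-th root, not before) so that the excess higher-moment mass is exactly absorbed, leaving the targeted linear combination $\epsilon\,\mathbb{E}\sup\|\sigma\|^p + C_{\epsilon,\lambda,T,p}\,\mathbb{E}\int_0^T\|\sigma\|^p\,\mathrm{d}s$.
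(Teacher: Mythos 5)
Your proposal has a genuine gap, and it sits exactly at the point you flag as the ``main obstacle.'' Applying Lemma \ref{moment1} at level $q>8$ to the \emph{untruncated} field $\sigma$ and then Lyapunov's inequality leaves you with
\begin{align*}
\mathbb{E}\sup_{t\le T}\Vert\mathrm{conv}(t,\cdot)\Vert_{L^2_\lambda}^p \;\le\; \Big(C_{\lambda,T,q}\,\mathbb{E}\int_0^T\Vert\sigma(s,\cdot)\Vert_{L^2_\lambda}^q\,\mathrm{d}s\Big)^{p/q},
\end{align*}
where the power $p/q$ is \emph{outside} the expectation; Jensen only gives $\mathbb{E}(Y^{p/q})\le(\mathbb{E}Y)^{p/q}$, which is the wrong direction for moving it inside. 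From this intermediate quantity the target bound is simply false: take $\sigma(s,y)=M\,g(y)\,\mathbbm{1}_E$ with $E\in\mathcal{F}_0$, $\mathbb{P}(E)=\delta$, $\Vert g\Vert_{L^2_\lambda}=1$. Then $\big(\mathbb{E}\int_0^T\Vert\sigma\Vert^q\big)^{p/q}=T^{p/q}M^p\delta^{p/q}$, while $\epsilon\,\mathbb{E}\sup_t\Vert\sigma\Vert^p+C\,\mathbb{E}\int_0^T\Vert\sigma\Vert^p\,\mathrm{d}s=(\epsilon+CT)M^p\delta$, and $\delta^{p/q}/\delta\to\infty$ as $\delta\to0$ since $p<q$. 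The same example defeats the precise form you need before taking the $p/q$-th root, namely $\mathbb{E}\big[\sup_t\Vert\sigma\Vert^{q-p}\,V\big]\le \epsilon^{q/p}\big(\mathbb{E}\sup_t\Vert\sigma\Vert^p\big)^{q/p}+C^{q/p}\big(\mathbb{E}V\big)^{q/p}$. No rearrangement of Young's inequality and the bound $V\le T\sup_t\Vert\sigma\Vert^p$ can repair this (that bound merely regenerates $\sup_t\Vert\sigma\Vert^q$ or the original product), because every such manipulation keeps you above a quantity that already exceeds the admissible right-hand side.

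The missing idea is localization. The paper first truncates, setting $\tilde\sigma(s,y)=\sigma(s,y)\mathbbm{1}_{\{\int_0^s\Vert\sigma(r,\cdot)\Vert_{L^2_\lambda}^q\mathrm{d}r\le\eta^q\}}$, uses the local property of the Walsh integral to replace $\sigma$ by $\tilde\sigma$ on the event $\Omega_\eta=\{\int_0^T\Vert\sigma\Vert_{L^2_\lambda}^q\,\mathrm{d}s\le\eta^q\}$, and applies Lemma \ref{moment1} at level $q$ to $\tilde\sigma$; this yields a tail estimate whose right-hand side involves $\mathbb{E}\min\{\eta^q,\int_0^T\Vert\sigma\Vert^q\}$. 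Integrating that tail bound against $p\eta^{p-1}\,\mathrm{d}\eta$ (layer-cake) produces $C_{\lambda,T,p,q}\,\mathbb{E}\big(\int_0^T\Vert\sigma\Vert_{L^2_\lambda}^q\,\mathrm{d}s\big)^{p/q}$ --- now with the power \emph{inside} the expectation --- and only then do your factorization $\int_0^T\Vert\sigma\Vert^q\,\mathrm{d}s\le\sup_t\Vert\sigma\Vert^{q-p}\cdot V$ and the pointwise Young's inequality with exponents $q/(q-p)$ and $q/p$ close the argument exactly as you describe. Your ending (and the constant you quote) is the right one, but it is only reachable after the truncation and tail-estimate step, which your proposal omits.
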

\begin{proof}
	The proof is inspired by \cite{SZ1}.
	
	Step 1: We show that for $q>8,\ \eta>0$,
	\begin{align}\label{mm21}
		&\mathbb{P}\left(\sup_{t \leq T}\left\Vert\int_{0}^{t} \int_{\mathbb{R}} p_{t-s}(x, y) \sigma(s, y) W(\mathrm{d}s,\mathrm{d}y)\right\Vert_{L^2_{\lambda}}>\eta\right) \nonumber\\
		\leq& \mathbb{P}\left(\int_{0}^{T}\Vert\sigma(s,\cdot)\Vert_{L^2_{\lambda}}^q \mathrm{d} s>\eta^q\right)+\frac{c_{\lambda,T, q}}{\eta^q} \mathbb{E} \min \left\{\eta^q, \int_{0}^{T}\Vert\sigma(s,\cdot)\Vert_{L^2_{\lambda}}^q \mathrm{d}s\right\}.
	\end{align}
	Set
	\begin{align*}
		\Omega_\eta:=\left\{\omega:\int_{0}^{T}\left\Vert\sigma(r,\cdot)\right\Vert^q_{L^2_\lambda}\mathrm{d}r\leq \eta^q\right\}.
	\end{align*}
	Then we have by Chebyshev's inequality that
	\begin{align}\label{mm22}
		&\mathbb{P}\left(\sup_{t \leq T}\left\Vert\int_{0}^{t} \int_{\mathbb{R}} p_{t-s}(\cdot, y) \sigma(s, y) W(\mathrm{d}s,\mathrm{d}y)\right\Vert_{L^2_{\lambda}}>\eta\right) \nonumber\\
		\leq & \mathbb{P}(\Omega/\ \Omega_\eta)+\mathbb{P}\left(\sup_{t \leq T}\left[\left\Vert\int_{0}^{t} \int_{\mathbb{R}} p_{t-s}(\cdot, y) \sigma(s, y) W(\mathrm{d}s,\mathrm{d}y)\right\Vert_{L^2_\lambda} \mathbbm{1}_{\Omega_\eta}\right]>\eta\right) \nonumber\\
		\leq & \mathbb{P}\left(\int_{0}^{T}\Vert\sigma(s,\cdot)\Vert_{L^2_{\lambda}}^q \mathrm{d} s>\eta^q\right)+\frac{1}{\eta^q}\mathbb{E}\sup_{t \leq T}\left[\left\Vert\int_{0}^{t} \int_{\mathbb{R}} p_{t-s}(\cdot, y) \sigma(s, y) W(\mathrm{d}s,\mathrm{d}y)\right\Vert_{L^2_\lambda} \mathbbm{1}_{\Omega_\eta}\right]^q.
	\end{align}	
	Define a new random field
	\begin{align*}
		\tilde{\sigma}(s,y):=\sigma(s,y)\mathbbm{1}_{\{\omega:\int_{0}^{s}\Vert \sigma(r,\cdot)\Vert^q_{L^2_\lambda}\mathrm{d}r\leq \eta^q\}}.
	\end{align*}	
	By the local property of the stochastic integral(see Lemma A.1 in \cite{SZ1}),
	\begin{align*}
		\mathbbm{1}_{\Omega_\eta}\int_{0}^{t}\int_{\mathbb{R}}p_{t-s}(x,y)\sigma(s,y)W(\mathrm{d}s,\mathrm{d}y)=\mathbbm{1}_{\Omega_\eta}\int_{0}^{t}\int_{\mathbb{R}}p_{t-s}(x,y)\tilde{\sigma}(s,y)W(\mathrm{d}s,\mathrm{d}y),\ \mathbb{P}\text{-a.s.}
	\end{align*}
	In view of the above equality, we can use the bound in Lemma \ref{moment1}:
	\begin{align}\label{mm23}
		&\mathbb{E}\sup_{t \leq T}\left[\left\Vert\int_{0}^{t} \int_{\mathbb{R}} p_{t-s}(\cdot, y) \sigma(s, y) W(\mathrm{d}s,\mathrm{d}y)\right\Vert_{L^2_\lambda} \mathbbm{1}_{\Omega_\eta}\right]^q\nonumber\\
		\leq& \mathbb{E}\sup_{t \leq T}\left\Vert\int_{0}^{t} \int_{\mathbb{R}} p_{t-s}(\cdot, y) \tilde{\sigma}(s, y) W(\mathrm{d}s,\mathrm{d}y)\right\Vert_{L^2_\lambda}^q\nonumber\\
		\leq & C_{\lambda,T,q} ~\mathbb{E}\int_{0}^{T}\Vert\tilde{\sigma}(s,\cdot) \Vert^q_{L^2_{\lambda}}\mathrm{d}s\nonumber\\
		\leq & C_{\lambda,T,q} ~\mathbb{E}\min\{\eta^q,\int_{0}^{T}\Vert\sigma(s,\cdot) \Vert^q_{L^2_{\lambda}}\mathrm{d}s\},
	\end{align}
	where $C_{\lambda,T,q}$ is the constant appeared in \eqref{mm1} with $q$ replacing $p$. Now \eqref{mm21} follows immediately from  \eqref{mm22} and \eqref{mm23}.
	
	Step 2: For $0<p\leq 8$, we  follow Lemma A.2 in \cite{SZ1} to obtain that
	\begin{align*}
		&\mathbb{E} ~\sup_{t \leq T}\left\Vert\int_{0}^{t} \int_{\mathbb{R}} p_{t-s}(\cdot, y) \sigma(s, y) W(\mathrm{d}s,\mathrm{d}y)\right\Vert_{L^2_{\lambda}}^p\nonumber\\
		=&\int_{0}^{\infty} p\eta^{p-1}\mathbb{P}\left(\sup_{t \leq T}\left\Vert \int_{0}^{t} \int_{\mathbb{R}} p_{t-s}(\cdot, y) \sigma(s, y) W(\mathrm{d}s,\mathrm{d}y)\right\Vert^p_{L^2_\lambda}>\eta\right)\mathrm{d}\eta\nonumber\\
		\leq & \int_{0}^{\infty} p\eta^{p-1}\mathbb{P}\left(\int_{0}^{T}\Vert \sigma(s,\cdot)\Vert^q_{L^2_\lambda}\mathrm{d}s>\eta^q\right)\mathrm{d}\eta\nonumber\\
		&\quad+C_{\lambda,T,q}\int_{0}^{\infty} p\eta^{p-1-q}~\mathbb{E}\min\left\{\eta^q,\int_{0}^{T}\Vert\sigma(s,\cdot)\Vert^q_{L^2_\lambda}\mathrm{d}s\right\}\mathrm{d}\eta \nonumber\\
		\leq& C_{\lambda, T,p, q} ~\mathbb{E}\left(\int_{0}^{t}\Vert\sigma(s,\cdot)\Vert_{L^2_{\lambda}}^q \mathrm{d} s\right)^{\frac{p}{q}} \nonumber\\
		\leq& C_{\lambda, T,p,q}~\mathbb{E}\left[\sup_{s \leq T}\Vert\sigma(s,\cdot)\Vert_{L^2_{\lambda}}^{\frac{(q-p) p}{q}}\left(\int_{0}^{T}\Vert\sigma(s,\cdot)\Vert_{L^2_{\lambda}}^p \mathrm{d} s\right)^{\frac{p}{q}}\right] \nonumber\\
		\leq& \epsilon~\mathbb{E} \sup_{s\leq T}\Vert\sigma(s,\cdot)\Vert_{L^2_{\lambda}}^p+C_{\epsilon,\lambda,T,p}~\mathbb{E} \int_{0}^{T}\Vert\sigma(s,\cdot)\Vert_{L^2_{\lambda}}^p \mathrm{d}s,
	\end{align*}
	where $C_{\lambda,T,p,q}=1+\frac{q}{q-p}C_{\lambda,T,q}$, and  Young's inequality is used in the last step.
\end{proof}

\section{Proof of Theorem \ref{Lresult}}
$E=L^2_{tem}$. Take $\nu\ll \mu$ on $C([0, T],E)$.
Define the corresponding measure $\mathbb{Q}$ by (\ref{add 0303.1}).
Let $h(t,x)$ be the corresponding random field appeared in Lemma \ref{CtemGirsa}. Then  the solution $u(t,x)$ of equation (\ref{3.1}) satisfies the following SPDE under the measure $\mathbb{Q}$,
\begin{align}
	\label{111.3}
	u(t,x)= & P_t u_0(x) + \int_{0}^{t}\int_{\mathbb{R}} p_{t-s}(x,y)b(u(s,y))\,\mathrm{d}y\mathrm{d}s \nonumber\\
	& +\int_{0}^{t}\int_{\mathbb{R}} p_{t-s}(x,y)\sigma(u(s,y))\,\widetilde{W}(\mathrm{d}s,\mathrm{d}y)  \nonumber\\
	& + \int_{0}^{t}\int_{\mathbb{R}} p_{t-s}(x,y)\sigma(u(s,y))h(s,y)\,\mathrm{d}y\mathrm{d}s.
\end{align}
Consider the solution of the following SPDE:
\begin{align}
	\label{111.2}
	v(t,x)= & P_t u_0(x) + \int_{0}^{t}\int_{\mathbb{R}} p_{t-s}(x,y)b(v(s,y))\,\mathrm{d}y\mathrm{d}s \nonumber\\
	& +\int_{0}^{t}\int_{\mathbb{R}} p_{t-s}(x,y)\sigma(v(s,y))\,\widetilde{W}(\mathrm{d}s,\mathrm{d}y).
\end{align}
By Lemma \ref{CtemGirsa} it follows that under the measure $\mathbb{Q}$, the law of $(v,u)$ forms a coupling of $(\mu, \nu)$.
Therefore by the definition of the Wasserstein distance,
\begin{align*}
	W_2(\nu, \mu)^2 & \leq  \mathbb{E}^{\mathbb{Q}} \left[\sup_{t\in[0,T]}d(u(t), v(t))^2 \right].
\end{align*}
Here $d$ stands for the metric of space $E$. More precisely, we have
\begin{align*}
	W_2(\nu, \mu)^2 & \leq  \mathbb{E}^{\mathbb{Q}}\left[\sup_{t\in[0,T]}\bigg| \sum_{n=1}^{\infty}\frac{1}{2^n}\min\Big\{1, \|u(t)-v(t)\|_{L^2_{1/n}}\Big\} \bigg|^2\right] \nonumber\\
	&\leq \mathbb{E}^{\mathbb{Q}}\left[\sup_{t\in[0,T]} \sum_{n=1}^{\infty}\frac{1}{2^n}\|v(t)-u(t)\|^2_{L^2_{1/n}} \right] \nonumber\\
	& \leq\sum_{n=1}^{\infty}\frac{1}{2^n} \mathbb{E}^{\mathbb{Q}}\left[\sup_{t\in[0,T]}\left( \int_{\mathbb{R}} |v(t,x)-u(t,x)|^2 e^{-\frac{2}{n} |x|}\mathrm{d}x\right)\right].
\end{align*}
In view of (\ref{4.4}), to prove the quadratic transportation cost inequality
\begin{align}
	W_2(\nu, \mu)\leq \sqrt{2C H(\nu|\mu)},
\end{align}
it is sufficient to show that exists a constant $C$ independent of $n$ such that for any $n\in\mathbb{N}$,
\begin{align}\label{Lineq}
	\mathbb{E}^{\mathbb{Q}}\left[\sup_{t\in[0,T]}\left(\int_{\mathbb{R}} |u(t,x)-v(t,x)|^2 e^{-\frac{2}{n} |x|}\mathrm{d}x\right)\right]
	\leq C \mathbb{E}^{\mathbb{Q}}\left[\int_{0}^{T}\int_{\mathbb{R}} h^2(s,y)\,\mathrm{d}y\mathrm{d}s\right]
\end{align}
when the right-hand side of (\ref{Lineq}) is finite.
To this end, for $\lambda >0$ let us estimate
\begin{align*}
	\mathbb{E}^{\mathbb{Q}}\left[\sup_{t\in[0,T]}\left( \int_{\mathbb{R}} |u(t,x)-v(t,x)|^2 e^{-2\lambda |x|}\mathrm{d}x\right)\right].
\end{align*}
For convenience, in the sequel we denote $\mathbb{E}^{\mathbb{Q}}$ still  by the symbol $\mathbb{E}$. From (\ref{111.3}) and (\ref{111.2}) it follows that
\begin{align}\label{L0}
	&\mathbb{E} ~\sup_{t \leq T}\left\{ \int_{\mathbb{R}}|u(t, x)-v(t, x)|^2 e^{-2 \lambda|x|} \mathrm{d}x \right\}\nonumber\\
	\leq& 3~\mathbb{E} ~\sup_{t \leq T} \left\{\int_{\mathbb{R}}\left|\int_{0}^{t} \int_{\mathbb{R}} p_{t-s}(x, y)[b(u(s, y))-b(v(s, y))] \mathrm{d}s \mathrm{d}y\right|^2 e^{-2 \lambda|x|} \mathrm{d}x\right\} \nonumber\\
	&+3~\mathbb{E}~ \sup_{t \leq T} \left\{\int_{\mathbb{R}}\left|\int_{0}^{t} \int_{\mathbb{R}} p_{t-s}(x, y)[\sigma(u(s, y))-\sigma(v(s, y))] \widetilde{W}(\mathrm{d}s, \mathrm{d}y)\right|^2 e^{-2 \lambda|x|} \mathrm{d}x\right\}\nonumber\\
	&+3~\mathbb{E} ~\sup_{t \leq T}\left\{ \int_{\mathbb{R}}\left|\int_{0}^{t} \int_{\mathbb{R}} p_{t-s}(x, y) \sigma(u(s, y)) h(s, y) \mathrm{d}s \mathrm{d}y\right|^2 e^{-2 \lambda|x|} \mathrm{d}x\right\} \nonumber\\
	=&:3(I_1+I_2+I_3).
\end{align}
By H\"{o}lder's inequality and (\textbf{H1}) we have that
\begin{align}\label{L1}
	I_1  \leq & \mathbb{E}~\sup_{t \leq T}\left\{\int_{\mathbb{R}} \left(\int_{0}^{t} \int_{\mathbb{R}} p_{t-s}(x, y) \mathrm{d}y \mathrm{d}s\right)\right.\nonumber\\
	&\quad\times
	\left.\left(\int_{0}^{t} \int_{\mathbb{R}} p_{t-s}(x, y)|b(u(s, y))-b(v(s, y))|^2 \mathrm{d}y \mathrm{d}s\right) \,e^{-2 \lambda|x|} \mathrm{d}x\right\} \nonumber\\
	\leq & T L^2_b~\mathbb{E}~\sup_{t \leq T}\left\{\int_{\mathbb{R}} \int_{0}^{t} \int_{\mathbb{R}} p_{t-s}(x, y)|u(s, y)-v(s, y)|^2 \mathrm{d}y\mathrm{d}s\,e^{-2 \lambda|x|} \mathrm{d}x\right\} \nonumber\\
	\leq & 2Te^{2\lambda^2T} L^2_b~\mathbb{E}\int_{0}^{T}\int_{\mathbb{R}}|u(s,y)-v(s,y)|^2 e^{-2 \lambda|y|} \mathrm{d}y\mathrm{d}s \nonumber\\
	\leq&2Te^{2\lambda^2T} L^2_b~\int_{0}^{T}\mathbb{E} \sup_{r \leq s}\left\{\int_{\mathbb{R}}|u(r,y)-v(r,y)|^2 e^{-2 \lambda|y|} \mathrm{d}y\right\}\mathrm{d}s.
\end{align}
In the third inequality, Fubini's theorem and \eqref{103.1} were used. According to Lemma \ref{moment2}, for  $\epsilon>0$,
\begin{align}\label{L2}
	&\mathbb{E} \sup _{t \leq T} \left\{\int_{\mathbb{R}}\left|\int_{0}^{t} \int_{\mathbb{R}} p_{t-s}(x, y)[\sigma(u(s,y))-\sigma(v(s,y))]\widetilde{W}(\mathrm{d}s,\mathrm{d}y)\right|^2 e^{-2 \lambda|x|} \mathrm{d}x\right\} \nonumber\\
	\leq& \epsilon ~\mathbb{E} ~\sup_{s \leq T}\left\{ \int_{\mathbb{R}}|\sigma(u(s, y))-\sigma(v(s, y))|^2e^{-2\lambda|y|} \mathrm{d}y\right\} \nonumber\\
	&+C_{\epsilon, \lambda,T} \int_{0}^{T} \mathbb{E} \int_{\mathbb{R}}|\sigma(u(s, y))-\sigma(v(s, y))|^2 e^{-2\lambda|y|} \mathrm{d}y\, \mathrm{d}s \nonumber\\
	\leq& \epsilon L^2_\sigma ~\mathbb{E}~ \sup_{s \leq T} \left\{\int_{\mathbb{R}}|u(s, y)-v(s, y)|^2e^{-2\lambda|y|} \mathrm{d}y \right\}\nonumber\\
	&+C_{\epsilon, \lambda, T}  L^2_\sigma \int_{0}^{T} \mathbb{E} \sup_{r \leq s}\left\{\int_{\mathbb{R}}|u(r, y)-v(r, y)|^2 e^{-2\lambda|y|} \mathrm{d}y\right\} \mathrm{d}s.
\end{align}
For the third term $I_3$, by the boundedness of $\sigma$(see (\textbf{H2})), Minkowski's inequality for integrals, \eqref{kernal} and H\"{o}lder's inequality we have
\begin{align}\label{L3}
	I_3 \leq & K_\sigma^2~\mathbb{E}~\sup_{t \leq T} \left\Vert \int_{0}^{t}P_{t-s}h(s,\cdot)\mathrm{d}s\right\Vert^2_{L^2_\lambda}\nonumber\\
	\leq & K_\sigma^2~\mathbb{E}~\sup_{t \leq T}  \left|\int_{0}^{t}\Vert P_{t-s}h(s,\cdot)\Vert_{L^2_\lambda}\mathrm{d}s\right|^2\nonumber\\
	\leq & K_\sigma^2~\mathbb{E}~\sup_{t \leq T}\left| \int_{0}^{t}\sqrt{2}e^{\lambda^2t}\Vert h(s,\cdot)\Vert_{L^2_\lambda}\mathrm{d}s\right|^2 \nonumber\\	
	\leq &2e^{2\lambda^2T} T K_\sigma^2~\mathbb{E}~ \int_{0}^{T} \Vert h(s,\cdot)\Vert^2_{L^2_\lambda} \mathrm{d}s \nonumber\\
	\leq & 2e^{2\lambda^2T} T K_\sigma^2~\mathbb{E} \int_{0}^{T} \int_{\mathbb{R}} |h(t,x)|^2\mathrm{d}x\mathrm{d}t.
\end{align}
Substituting \eqref{L1}-\eqref{L3} into \eqref{L0}, we have
\begin{align*}
	&(1-3\epsilon L^2_\sigma)\mathbb{E} ~\sup_{t \leq T}\left\{ \int_{\mathbb{R}}|u(t, x)-v(t, x)|^2 e^{-2 \lambda|x|} \mathrm{d}x \right\}\nonumber\\
	\leq & C_{\epsilon,\lambda,T,L_b,L_\sigma}\int_{0}^{T}\mathbb{E} \sup_{r \leq s}\left\{\int_{\mathbb{R}}|u(s,y)-v(s,y)|^2 e^{-2 \lambda|y|} \mathrm{d}y\right\}\mathrm{d}s\nonumber\\
	&+6Te^{2\lambda^2T} K_\sigma^2~\mathbb{E} \int_{0}^{T} \int_{\mathbb{R}} |h(t,x)|^2\mathrm{d}x\mathrm{d}t.
\end{align*}
Taking any $0<\epsilon<\frac{1}{3L^2_\sigma}$ and applying the Gronwall's inequality we obtain
\begin{align*}
	&\mathbb{E} \sup_{t \leq T} \left\{\int_{\mathbb{R}}|u(t, x)-v(t, x)|^2 e^{-2\lambda|x|} \mathrm{d}x \right\} \nonumber\\
	\leq & \frac{6Te^{2\lambda^2T} K_\sigma^2}{1-3\epsilon L^2_\sigma}\exp\left\{\frac{TC_{\epsilon,\lambda,T,L_b,L_\sigma}}{1-3\epsilon L^2_\sigma}\right\} ~\mathbb{E} \int_{0}^{T} \int_{\mathbb{R}}|h(t,x)|^2 \mathrm{d}x\mathrm{d}t.
\end{align*}
Taking infimum over $\epsilon$, we get
\begin{align*}
	\mathbb{E} \sup_{t\leq T} \left\{\int_{\mathbb{R}}|u(t, x)-v(t, x)|^2 e^{-2 \lambda|x|} \mathrm{d}x\right\}
	\leq 
	C_{\lambda,T,L_b,L_\sigma,K_\sigma}~\mathbb{E} \int_{0}^{T} \int_{\mathbb{R}}|h(t,x)|^2 \mathrm{d}x\mathrm{d}t.
\end{align*}
Noting that $C_{\lambda,T,L_b,L_\sigma,K_\sigma}$ is increasing with respect to $\lambda$, it follows that  for each $n\in\mathbb{N}$,
\begin{align*}
	\mathbb{E}\sup_{t\leq T}\left(\int_{\mathbb{R}}|u(t,x)-v(t,x)|^2 e^{-\frac{2}{n} |x|}\mathrm{d}x\right)
	\leq  
	C_{1,T,L_b,L_\sigma,K_\sigma}~\mathbb{E} \int_{0}^{T} \int_{\mathbb{R}}|h(t,x)|^2 \mathrm{d}x\mathrm{d}t,
\end{align*}
This proves (\ref{Lineq}) and hence the proof of Theorem \ref{Lresult} is complete.

\section{Proof of Theorem \ref{Ctemresult}}

In the proof of the main results, the following moments estimates for stochastic convolution against space-time white noise obtained in \cite{SZ2} will play a crucial role.

\begin{lemma}\label{210121.1036}
	Let $h: \mathbb{R}_{+} \longmapsto \mathbb{R}_{+}$ be an increasing function.
	Let $\{\sigma(s,y): (s,y)\in\mathbb{R}_+\times \mathbb{R}\}$ be an adapted random field such that the following stochastic convolution with respect to space-time white noise is well defined.
	Let $\tau$ be a stopping time.
	
	\begin{itemize}
		\item [(i)] Then  for any $p>10$ and $T>0$, there exists a constant $\Theta_{p, h(T), T}>0$ such that
		\begin{align}\label{210119.2121}
			& \mathbb{E} \sup_{(t,x)\in [0,T\wedge\tau]\times \mathbb{R}}\left\{\left|\int_{0}^{t}\int_{\mathbb{R}}p_{t-s}(x,y)\sigma(s,y)W(ds,dy)\right| e^{-h(t)|x|}\right\}^p \nonumber\\
			\leq & \Theta_{p, h(T), T} ~\mathbb{E}\int_0^{T\wedge\tau} \int_{\mathbb{R}}|\sigma(t,x)|^p e^{-p h(t) |x|} \, \mathrm{d}x \mathrm{d}t .
		\end{align}
		
		\item [(ii)] Then for any $\epsilon, T>0$ and $0<p\leq 10$, there exists a constant $\Theta_{\epsilon, p, h(T), T}>0$ such that
		\begin{align}\label{220525.1627}
			& \mathbb{E} \sup_{(t,x)\in [0,T\wedge\tau]\times \mathbb{R}}\left\{\left|\int_{0}^{t}\int_{\mathbb{R}}p_{t-s}(x,y)\sigma(s,y)W(ds,dy)\right|  e^{-h(t)|x|}\right\}^p \nonumber\\
			\leq & \epsilon  ~\mathbb{E} \sup_{(t,x)\in[0,T\wedge\tau]\times\mathbb{R}}\left(|\sigma(t,x)| e^{-h(t)|x|} \right)^p  \nonumber\\
			& + \Theta_{\epsilon,p,h(T),T} ~\mathbb{E}\int_0^{T\wedge\tau}\int_{\mathbb{R}} \left|\sigma(t,x)\right|^p e^{-ph(t)|x|}\,\mathrm{d}x\mathrm{d}t.
		\end{align}
	\end{itemize}
Here $\Theta_{p, h(T), T}$ and $\Theta_{\epsilon, p, h(T), T}$ are increasing w.r.t. $h(T)$ and w.r.t. $T$.
\end{lemma}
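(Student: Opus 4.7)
The strategy is to parallel Lemmas \ref{moment1} and \ref{moment2}, replacing the weighted $L^2$-norm by the weighted uniform norm while keeping the factorization method and Lemma \ref{BDG} as the central tools.

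\textbf{Step 1 (factorization for part (i)).} For an auxiliary parameter $\alpha$ to be chosen in a narrow window, I would write the stochastic convolution as $J^{\alpha-1}(J_\alpha \sigma)(t,x)$ exactly as in \eqref{factor1}--\eqref{mm11}. Multiplying by $e^{-h(t)|x|}$ and applying H\"older in $(s,y)$ to the kernel representation of $J^{\alpha-1}$, one estimates $\int_{\mathbb{R}} p_{t-s}(x,y) e^{-q h(s)|y|}\mathrm{d}y$ by means of \eqref{103.1}, together with $|y|-|x|\leq |x-y|$ and the monotonicity of $h$. This yields, uniformly in $(t,x)\in[0,T\wedge\tau]\times\mathbb{R}$, a deterministic pointwise bound of the form
\begin{align*}
\bigl(|J^{\alpha-1}(J_\alpha\sigma)(t,x)|\,e^{-h(t)|x|}\bigr)^p \leq C_{\alpha,p,T,h(T)} \int_0^{T\wedge\tau}\!\!\int_{\mathbb{R}} |J_\alpha\sigma(s,y)|^p e^{-p h(s)|y|}\,\mathrm{d}y\,\mathrm{d}s.
\end{align*}
Taking expectations and using Lemma \ref{BDG} together with \eqref{103.2} to control $\mathbb{E}|J_\alpha\sigma(s,y)|^p$ by the weighted $L^p$-norm of $\sigma$, an estimate analogous to \eqref{mm14} then produces \eqref{210119.2121}. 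The admissible range for $\alpha$ is precisely what forces $p>10$, and monotonicity of $\Theta_{p,h(T),T}$ in $h(T)$ and $T$ is transparent from the heat-kernel bounds used.

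\textbf{Step 2 (truncation for part (ii)).} For $0<p\leq 10$, I would mimic the argument of Lemma \ref{moment2}. Fix $q>10$, $\eta>0$ and set
\begin{align*}
\widetilde\sigma(s,y):=\sigma(s,y)\,\mathbbm{1}_{\{\int_0^s\int_{\mathbb{R}}|\sigma(r,z)|^q e^{-q h(r)|z|}\mathrm{d}z\mathrm{d}r\leq \eta^q\}}.
\end{align*}
By the local property of the stochastic integral, the convolutions of $\sigma$ and $\widetilde\sigma$ coincide on the event $\{\int_0^{T\wedge\tau}\int_{\mathbb{R}}|\sigma|^q e^{-q h(s)|z|}\leq\eta^q\}$. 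Combining Chebyshev with part (i) applied to $\widetilde\sigma$ at exponent $q$ gives a tail-bound analogous to \eqref{mm21}. Integrating via the layer-cake formula and applying Young's inequality with exponents $q/(q-p)$ and $q/p$ separates the outcome into the two terms of \eqref{220525.1627}, with $\epsilon$ absorbing the supremum of $|\sigma|e^{-h(t)|x|}$; monotonicity of $\Theta_{\epsilon,p,h(T),T}$ follows from that in part (i).

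\textbf{Main obstacle.} The principal difficulty, absent in the $L^2_{tem}$ setting of Section 3, is that the supremum is now taken jointly in $(t,x)$ over an unbounded spatial domain with a $t$-dependent weight $h(t)|x|$. One must transfer weights taken at different times through the heat kernel, which narrows the admissible $\alpha$-window in the factorization and shifts the integrability threshold from $p>8$ to $p>10$. Balancing the weight-transfer constraint against the simultaneous requirements $\alpha>1/p$ (for integrability of the deterministic $J^{\alpha-1}$ kernel) and $\alpha<1/4-1/p$ (for the BDG step applied to $J_\alpha\sigma$) is the technical heart of the argument.
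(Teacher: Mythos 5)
The paper does not actually prove Lemma \ref{210121.1036}: it is imported verbatim from the reference \cite{SZ2} (``the following moments estimates \dots obtained in \cite{SZ2}''), so there is no in-paper proof to compare against. Judged on its own merits, your plan is the natural one and is indeed the strategy used in the cited source: factorization plus Lemma \ref{BDG} for the high-moment bound, then the truncation/local-property/layer-cake/Young argument of Lemma \ref{moment2} to descend to $0<p\le 10$; Step 2 in particular is a faithful transcription of the Step 1--Step 2 scheme of Lemma \ref{moment2} and would go through once part (i) is in hand (you should, however, say explicitly how the stopping time $\tau$ enters, e.g.\ by replacing $\sigma$ with $\sigma\mathbbm{1}_{[0,\tau]}$ and invoking the local property, since the statement is localized at $T\wedge\tau$).

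One point in your ``main obstacle'' paragraph is misdiagnosed. The shift of the threshold from $p>8$ to $p>10$ is not caused by transferring the time-dependent weight $e^{-h(t)|x|}$ through the heat kernel --- that transfer costs only a bounded factor, via \eqref{103.1} and the monotonicity of $h$ (so $h(s)\le h(t)$ for $s\le t$). It is caused by the fact that in the sup-norm setting the deterministic operator $J^{\alpha-1}$ must be estimated pointwise by H\"older against the kernel itself: one needs $\int_{\mathbb{R}}p_{t-s}(x,y)^{q'}e^{q'h(s)|y|}\,\mathrm{d}y\lesssim (t-s)^{-(q'-1)/2}e^{q'h(t)|x|}$ with $q'=p/(p-1)$, and the extra singularity $(t-s)^{-(q'-1)/2}$ tightens the lower constraint on $\alpha$ from $\alpha>1/p$ to $\alpha>3/(2p)$. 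Combined with the unchanged upper constraint $\alpha<1/4-1/p$ from the BDG step on $J_\alpha\sigma$, the window is nonempty precisely when $3/(2p)+1/p<1/4$, i.e.\ $p>10$. In the $L^2_\lambda$ setting of Lemma \ref{moment1} this cost is avoided because one uses the semigroup contraction \eqref{kernal} instead of a pointwise kernel bound, which is why the threshold there is only $p>8$. With that correction your outline is sound.
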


\vskip 0.5cm
\noindent {\bf Proof of Theorem \ref{Ctemresult}}.
\vskip 0.3cm

Let $E=C_{tem}$. As in the proof Theorem \ref{Lresult}, take $\nu\ll \mu$ on $C([0, T],E)$.
Define the corresponding measure $\mathbb{Q}$ by (\ref{add 0303.1}).
Let $h(t,x)$ be the corresponding random field appeared in Lemma \ref{CtemGirsa}. Then  the solution $u(t,x)$ of equation (\ref{3.1}) satisfies the following SPDE under the measure $\mathbb{Q}$,
\begin{align}
	\label{5-1}
	u(t,x)= & P_t u_0(x) + \int_{0}^{t}\int_{\mathbb{R}} p_{t-s}(x,y)b(u(s,y))\,\mathrm{d}y\mathrm{d}s \nonumber\\
	& +\int_{0}^{t}\int_{\mathbb{R}} p_{t-s}(x,y)\sigma(u(s,y))\,\widetilde{W}(\mathrm{d}s,\mathrm{d}y)  \nonumber\\
	& + \int_{0}^{t}\int_{\mathbb{R}} p_{t-s}(x,y)\sigma(u(s,y))h(s,y)\,\mathrm{d}y\mathrm{d}s.
\end{align}
Consider the solution of the following SPDE:
\begin{align}
	\label{5-2}
	v(t,x)= & P_t u_0(x) + \int_{0}^{t}\int_{\mathbb{R}} p_{t-s}(x,y)b(v(s,y))\,\mathrm{d}y\mathrm{d}s \nonumber\\
	& +\int_{0}^{t}\int_{\mathbb{R}} p_{t-s}(x,y)\sigma(v(s,y))\,\widetilde{W}(\mathrm{d}s,\mathrm{d}y).
\end{align}
By Lemma \ref{CtemGirsa} it follows that under the measure $\mathbb{Q}$, the law of $(v,u)$ forms a coupling of $(\mu, \nu)$.
Therefore by the definition of the Wasserstein distance,
\begin{align*}
		W_2(\nu, \mu)^2 & \leq  \mathbb{E}^{\mathbb{Q}} \left[\sup_{t\in[0,T]}d(u(t), v(t))^2 \right].
\end{align*}
Here $d$ stands for the metric of space $E$. More precisely, we have
\begin{align*}
	W_2(\nu, \mu)^2 & \leq \mathbb{E}^{\mathbb{Q}}\left[\sup_{t\in[0,T]}\left| \sum_{n=1}^{\infty}\frac{1}{2^n}\min\left\{1, \varrho_{1/n}(u(t),v(t))\right\} \right|^2\right] \nonumber\\
	&\leq \mathbb{E}^{\mathbb{Q}}\left[\sup_{t\in[0,T]} \sum_{n=1}^{\infty}\frac{1}{2^n} (\varrho_{1/n}(u(t),v(t)))^2 \right] \nonumber\\
	& \leq\sum_{n=1}^{\infty}\frac{1}{2^n} \mathbb{E}^{\mathbb{Q}}\left[\sup_{(t,x)\in[0,T]\times\mathbb{R}} \Big( |u(t,x)-v(t,x)|^2 e^{-\frac{2}{n} |x|}\Big) \right]
\end{align*}
Similarly, to prove the quadratic transportation cost inequality,
it is sufficient to show that exists a constant $\Theta$ independent of $n$ such that for any $n\in\mathbb{N}$,
\begin{align}\label{Ctemineq}
	\mathbb{E}^{\mathbb{Q}}\left[\sup_{(t,x)\in[0,T]\times\mathbb{R}}\Big( |u(t,x)-v(t,x)|^2 e^{-\frac{2}{n} |x|}\Big)\right]
	\leq \Theta \mathbb{E}^{\mathbb{Q}}\left[\int_{0}^{T}\int_{\mathbb{R}} h^2(s,y)\,\mathrm{d}y\mathrm{d}s\right]
\end{align}
when the right-hand side of (\ref{Ctemineq}) is finite. To this end, for $\lambda >0$, we consider
\begin{align*}
	Y(T):=\mathbb{E}^{\mathbb{Q}}\left[\sup_{(t,x)\in[0,T]\times\mathbb{R}}\left( |u(t,x)-v(t,x)|^2 e^{-2\lambda |x|}\right)\right].
\end{align*}
Again we still denote $\mathbb{E}^{\mathbb{Q}}$ by the symbol $\mathbb{E}$ for convenience. From (\ref{5-1}) and (\ref{5-2}) it follows that
\begin{align}\label{add 0302.1}
	Y(T)\leq&
	3\mathbb{E}\sup_{(t,x)\in[0,T]\times\mathbb{R}}\left\{\left|\int_{0}^{t}\int_{\mathbb{R}} p_{t-s}(x,y)\big[b(v(s,y))-b(u(s,y))\big]\,\mathrm{d}y\mathrm{d}s \right|^2e^{-2\lambda |x|}\right\}  \nonumber\\
	&+3\mathbb{E}\sup_{(t,x)\in[0,T]\times\mathbb{R}}\left\{\left|\int_{0}^{t}\int_{\mathbb{R}} p_{t-s}(x,y)\big[\sigma(v(s,y))-\sigma(u(s,y))\big]\,\widetilde{W}(\mathrm{d}s,\mathrm{d}y)\right|^2e^{-2\lambda |x|}\right\} \nonumber\\
	&+3\mathbb{E}\sup_{(t,x)\in[0,T]\times\mathbb{R}}\left\{\left|\int_{0}^{t}\int_{\mathbb{R}}p_{t-s}(x,y)\sigma(u(s,y))h(s,y)\mathrm{d}y\mathrm{d}s \right|^2e^{-2\lambda |x|}\right\} \nonumber \\
	=& :3(I + II +III).
\end{align}

\noindent By the assumption ({\bf H1}) and (\ref{103.1}), the term $I$ can be estimated as follows:
{\allowdisplaybreaks\begin{align}\label{term I}
		I \leq & L_b^2
		~\mathbb{E}\sup_{(t,x)\in[0,T]\times\mathbb{R}}\left\{\left|\int_{0}^{t}\int_{\mathbb{R}} p_{t-s}(x,y)|u(s,y)-v(s,y)|\mathrm{d}y\mathrm{d}s\right|^2e^{-2
			\lambda |x|}\right\}  \nonumber\\
		\leq & L_b^2 ~\mathbb{E}\sup_{(t,x)\in[0,T]\times\mathbb{R}}\left\{  \int_{0}^{t} \sup_{y\in\mathbb{R}} \left(|u(s,y)-v(s,y)|^2e^{-2\lambda |y|}\right) \int_{\mathbb{R}} p_{t-s}(x,y)e^{2\lambda |y|}\mathrm{d}y\mathrm{d}s~e^{-2\lambda|x|}  \right\}\nonumber\\
		\leq & 2e^{2\lambda^2T} L_b^2 ~\mathbb{E}\int_{0}^{t}\sup_{(r,y)\in[0,s]\times\mathbb{R}}\left(|u(r,y)-v(r,y)|^2e^{-2\lambda|y|}\right)\mathrm{d}s  \nonumber\\
		= &  2e^{2\lambda^2T} L_b^2 \int_{0}^{t} Y(s)\mathrm{d}s .
	\end{align}
	
	\noindent By the boundedness of $\sigma$ and using Hölder's inequality and (\ref{103.2}) the term $III$ can be bounded as follows,
	\begin{align}\label{term III}
		III \leq & K_{\sigma}^2~\mathbb{E}\sup_{(t,x)\in[0,T]\times\mathbb{R}}\left(\int_{0}^{t}\int_{\mathbb{R}} p_{t-s}^2(x,y) \mathrm{d}y\mathrm{d}s \cdot\int_{0}^{t}\int_{\mathbb{R}} |h(s,y)|^2 \mathrm{d}y\mathrm{d}s \right) \nonumber\\
		\leq & \frac{1}{\sqrt{\pi}}  K_{\sigma}^2 ~\mathbb{E} \sup_{t\in[0,T]}\left( \int_{0}^{t} \frac{1}{\sqrt{t-s}}\mathrm{d}s \int_{0}^{t}\int_{\mathbb{R}} |h(s,y)|^2 \mathrm{d}y\mathrm{d}s\right) \nonumber\\
		\leq & \frac{2\sqrt{T}}{\sqrt{\pi}}  K_{\sigma}^2  ~\mathbb{E}\int_{0}^{T}\int_{\mathbb{R}} |h(s,y)|^2 \mathrm{d}y\mathrm{d}s.
	\end{align}
	
	\noindent For the term $II$, by the assumption ({\bf H2}) and the estimate (\ref{220525.1627}) we obtain that for any $\epsilon>0$,
	\begin{align}\label{term II}
		II \leq & \epsilon L_{\sigma}^2 ~\mathbb{E}\sup_{(t,x)\in[0,T]\times\mathbb{R}}\left(|u(t,x)-v(t,x)|^2e^{-2\lambda |x|}\right) \nonumber\\
		& + \Theta_{\epsilon,\lambda,T} L_{\sigma}^2 ~\mathbb{E} \int_{0}^{T}\int_{\mathbb{R}} \left(\left|u(t,x)-v(t,x)\right|^2e^{-2\lambda |x|}\right)\mathrm{d}x\mathrm{d}t.
	\end{align}
	It remains to give an estimate for the following integral.
	\begin{align*}
		\mathbb{E}\int_{0}^{T}\int_{\mathbb{R}} \left(\left|u(t,x)-v(t,x)\right|^2e^{-2\lambda |x|}\right)\mathrm{d}x\mathrm{d}t.
	\end{align*}
	
	\begin{lemma}\label{L2}
		For any $\lambda>0$, $0\leq t\leq T$, there exists a constant $\Theta_{\lambda,T,L_b,L_\sigma,K_\sigma}$ increasing w.r.t. $\lambda$ such that
		\begin{align}\label{F}
			\mathbb{E}\int_{0}^{T}\int_{\mathbb{R}} \left|u(t,x)-v(t,x)\right|^2e^{-2\lambda |x|}\,\mathrm{d}x\mathrm{d}t\leq \Theta_{\lambda,T,L_b,L_\sigma,K_\sigma} 	\mathbb{E}\int_{0}^{T}\int_{\mathbb{R}}|h(t,x)|^2\,\mathrm{d}x\mathrm{d}t.
		\end{align}
	\end{lemma}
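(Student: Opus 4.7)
The plan is to set $F(t) := \mathbb{E}\int_{\mathbb{R}}|u(t,x)-v(t,x)|^2 e^{-2\lambda|x|}\,\mathrm{d}x$, subtract the integral equations \eqref{5-1} and \eqref{5-2}, square and apply $(a+b+c)^2 \le 3(a^2+b^2+c^2)$ to decompose into three contributions: the drift-difference term involving $b(u)-b(v)$, the stochastic integral driven by $\sigma(u)-\sigma(v)$ against $\widetilde W$, and the Girsanov drift term involving $\sigma(u)\,h$. Because no supremum in $(t,x)$ is being taken here, the tools are elementary: Cauchy--Schwarz in $(s,y)$ together with the heat kernel bound \eqref{103.1} for the two Lebesgue-integral terms, and It\^o isometry (or Burkholder with $p=2$) combined with \eqref{103.2} for the stochastic integral term.

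Using (\textbf{H1}), (\textbf{H2(b)}) and the boundedness \textbf{H2(a)}, these bounds produce, after integrating $e^{-2\lambda|x|}\,\mathrm{d}x$ and using Fubini plus \eqref{103.1}--\eqref{103.2} to shift the weight from $x$ to $y$, an inequality of the form
\begin{align*}
F(t) \le A_0\,\mathbb{E}\int_0^T\!\!\int_{\mathbb{R}}|h(s,y)|^2\,\mathrm{d}y\,\mathrm{d}s \;+\; A_1\int_0^t F(s)\,\mathrm{d}s \;+\; A_2\int_0^t \frac{F(s)}{\sqrt{t-s}}\,\mathrm{d}s,
\end{align*}
where $A_0,A_1,A_2$ depend explicitly on $T,\lambda,L_b,L_\sigma,K_\sigma$ and each carries an $e^{c\lambda^2 T}$-type factor, so they are monotone in $\lambda$.

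To close the estimate in the presence of the weakly singular kernel $(t-s)^{-1/2}$ coming from \eqref{103.2}, I would invoke a generalized Gronwall (Henry's) inequality; alternatively, since $(t-s)^{-1/2}\ast(t-s)^{-1/2}$ is a bounded kernel, iterating the inequality once reduces the problem to the classical Gronwall lemma. Either way one obtains a pointwise bound $F(t)\le \Theta'_{\lambda,T,L_b,L_\sigma,K_\sigma}\,\mathbb{E}\int_0^T\!\int_{\mathbb{R}}|h|^2\,\mathrm{d}y\,\mathrm{d}s$ with $\Theta'$ increasing in $\lambda$. Integrating over $t\in[0,T]$ then yields \eqref{F} with $\Theta_{\lambda,T,L_b,L_\sigma,K_\sigma}:=T\,\Theta'_{\lambda,T,L_b,L_\sigma,K_\sigma}$.

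The main technical obstacle is the weakly singular convolution kernel $(t-s)^{-1/2}$ arising from the $L^2$-bound on the stochastic term; tracking its effect through the Gronwall argument (and ensuring the resulting constant is monotone in $\lambda$, which is needed when the estimate is applied to $\lambda=1/n$ in \eqref{Ctemineq}) is where the bookkeeping has to be done carefully. All the other estimates are routine consequences of \eqref{103.1}--\eqref{103.2}, Cauchy--Schwarz, and the standing Lipschitz/boundedness assumptions on $b$ and $\sigma$.
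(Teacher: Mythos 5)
Your proposal matches the paper's proof essentially step for step: the same definition of $F(t)$, the same three-way decomposition with constant $3$, the bounds \eqref{103.1}--\eqref{103.2} combined with Cauchy--Schwarz for the drift and Girsanov terms and a second-moment (BDG/isometry) bound for the stochastic term, and the same resolution of the $(t-s)^{-1/2}$ kernel by iterating once before applying Gronwall. The paper likewise concludes with a pointwise-in-$t$ bound on $F(t)$ and integrates over $[0,T]$, so no further comment is needed.
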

	
	\begin{proof}
		Define
		\begin{align*}
			F(t):=\mathbb{E}\int_{\mathbb{R}}\left(\left|u(t,x)-v(t,x)\right|^2e^{-2\lambda |x|}\right)\mathrm{d}x,\ t\in[0,T].
		\end{align*}
		By (\ref{5-1}) and (\ref{5-2}), we have
		\begin{align}\label{F0}
			F(t)\leq & 3\mathbb{E}\int_{\mathbb{R}} \left|\int_{0}^{t}\int_{\mathbb{R}}p_{t-s}(x,y)\left[b(u(s,y))-b(v(s,y))\right]\mathrm{d}y\mathrm{d}s\right|^2e^{-2\lambda |x|}\mathrm{d}x \nonumber\\
			&+3\mathbb{E}\int_{\mathbb{R}} \left|\int_{0}^{t}\int_{\mathbb{R}}p_{t-s}(x,y)\sigma(u(s,y))h(s,y)\mathrm{d}y\mathrm{d}s\right|^2e^{-2\lambda |x|}\mathrm{d}x \nonumber\\
			&+3\mathbb{E}\int_{\mathbb{R}} \left|\int_{0}^{t}\int_{\mathbb{R}}p_{t-s}(x,y)\left[\sigma(u(s,y))-\sigma(v(s,y))\right]\tilde{W}(\mathrm{d}s,\mathrm{d}y)\right|^2e^{-2\lambda |x|}\mathrm{d}x \nonumber\\
			=&: 3J_1+3J_2+3J_3.
		\end{align}
		
		For the term $J_1$, using the assumption ({\bf H1}), Hölder's inequality and (\ref{103.1}) we obtain
		\begin{align}\label{F1}
			J_1\leq& L_b^2 ~\mathbb{E}\int_{\mathbb{R}}\left(\int_{0}^{t}\int_{\mathbb{R}}p_{t-s}(x,y)\mathrm{d}y\mathrm{d}s\cdot\int_{0}^{t}\int_{\mathbb{R}} p_{t-s}(x,y)|u(s,y)-v(s,y)|^2\mathrm{d}y\mathrm{d}s ~e^{-2\lambda |x|}\right)\mathrm{d}x \nonumber\\
			\leq & 2te^{2\lambda^2 t} L_b^2 ~\mathbb{E}\int_{0}^{t} \int_{\mathbb{R}} |u(s,y)-v(s,y)|^2e^{-2\lambda |y|}\mathrm{d}y~\mathrm{d}s \nonumber\\
			=& 2te^{2\lambda^2 t }L_b^2\int_{0}^{t} F(s)\mathrm{d}s,
		\end{align}
		where we used the Fubini theorem in the last step.
		
		And also we can estimate $J_2$ as follows,
		\begin{align}\label{F2}
			J_2\leq & K_\sigma^2~\mathbb{E}\int_{\mathbb{R}} \left(\int_{0}^{t} \int_{\mathbb{R}}p_{t-s}(x,y)\mathrm{d}y\mathrm{d}s\cdot\int_{0}^{t}\int_{\mathbb{R}} p_{t-s}(x,y)|h(s,y)|^2\mathrm{d}y\mathrm{d}s ~e^{-2\lambda |x|}\right)\mathrm{d}x \nonumber\\
			\leq & tK_\sigma^2 ~\mathbb{E}\int_{\mathbb{R}}\int_{0}^{t}\int_{\mathbb{R}} p_{t-s}(x,y)|h(s,y)|^2\mathrm{d}y\mathrm{d}s e^{-2\lambda |x|} \mathrm{d}x \nonumber\\
			\leq & 2te^{2\lambda^2 t} K_\sigma^2 ~\mathbb{E}\int_{0}^{t}\int_{\mathbb{R}}|h(s,y)|^2\mathrm{d}y\mathrm{d}s,
		\end{align}
		where the boundedness of $\sigma$ and (\ref{103.1}) were used.
		
		As for the term $J_3$, by the Fubini theorem(see \cite{Wa}) and the Bukrholder-Davis-Gundy's inequality (see \cite{K}), then using the assumption ({\bf H2}) and (\ref{103.2}) we have
		\begin{align}\label{F3}
			J_3\leq & ~8\int_{\mathbb{R}} \mathbb{E}\int_{0}^{t}\int_{\mathbb{R}} p_{t-s}(x,y)^2 |\sigma(u(s,y))-\sigma(v(s,y))|^2\mathrm{d}y\mathrm{d}s~e^{-2\lambda |x|}\mathrm{d}x \nonumber\\
			\leq & 8e^{\lambda^2 t}L_\sigma^2 ~\mathbb{E}\int_{0}^{t}\int_{\mathbb{R}} \frac{1}{\sqrt{\pi(t-s)}} |u(s,y)-v(s,y)|^2e^{-2\lambda|y|}\mathrm{d}y\mathrm{d}s \nonumber\\
			= & \frac{8}{\sqrt{\pi}} e^{\lambda^2 t}L_\sigma^2 \int_{0}^{t} \frac{F(s)}{\sqrt{t-s}} \mathrm{d}s,
		\end{align}
		where in the last step we used the Fubini theorem.
		
		Substituting (\ref{F1})-(\ref{F3}) into (\ref{F0}) leads to
		\begin{align}\label{F4}
			F(t)\leq & 6te^{2\lambda^2 t} K_\sigma^2 ~\mathbb{E}\int_{0}^{t}\int_{\mathbb{R}}|h(s,y)|^2\mathrm{d}y\mathrm{d}s+ 6te^{2\lambda^2 t }L_b^2\int_{0}^{t} F(s)\mathrm{d}s \nonumber\\
			&+\frac{24}{\sqrt{\pi}} e^{\lambda^2 t}L_\sigma^2 \int_{0}^{t} \frac{F(s)}{\sqrt{t-s}} \mathrm{d}s.
		\end{align}
		Then iterating \eqref{F4} and using Gronwall's inequality yield that for any $t\in[0,T]$,
		\begin{align*}
			\mathbb{E}\int_{\mathbb{R}} |u(t,x)-v(t,x)|^2e^{-2\lambda |x|} \mathrm{d}x
			\leq \Theta_{\lambda,T,L_b,L_\sigma,K_\sigma}  \mathbb{E}\int_{0}^{T}\int_{\mathbb{R}}|h(s,y)|^2\mathrm{d}y\mathrm{d}s.
		\end{align*}
		Then \eqref{F} follows from the preceding inequality.
	\end{proof}

	\vskip 0.5cm
	Now we come back to estimate $II$. Combining (\ref{term II}) with (\ref{F}) leads to
	\begin{align}\label{term II2}
		II\leq \epsilon L_\sigma^2 Y(T)+  L_\sigma^2 \Theta_{\epsilon, \lambda, T}\,\Theta_{\lambda,T,L_b,L_\sigma,K_\sigma}\,\mathbb{E}\int_{0}^{T}\int_{\mathbb{R}}|h(s,y)|^2\mathrm{d}y\mathrm{d}s.
	\end{align}
	Then putting (\ref{add 0302.1}), (\ref{term I}), (\ref{term III}) and  (\ref{term II2}) together, we obtain
	\begin{align}\label{4.1}
		Y(T)\leq  &6e^{2\lambda^2T} L_b^2 \int_{0}^{t} Y(s)\mathrm{d}s+3\epsilon L_\sigma^2 Y(T) \nonumber\\
		&+ \Theta_{\epsilon,\lambda, T,L_b,L_\sigma, K_\sigma}~\mathbb{E}\int_{0}^{T}\int_{\mathbb{R}}|h(t,x)|^2\mathrm{d}x\mathrm{d}t.
	\end{align}
Note that $\Theta_{\epsilon,\lambda, T,L_b,L_\sigma, K_\sigma}$ is increasing with respect to $\lambda$ and with repsect to $T$.
	
	Recall that for any $\lambda >0$, (see \cite{S})
	\begin{align*}
		& \mathbb{E}\sup_{(t,x)\in[0,T]\times\mathbb{R}}
		\left(|u(t,x)|^2e^{-2\lambda |x|}\right)< \infty , \\
		& \mathbb{E}\sup_{(t,x)\in[0,T]\times\mathbb{R}}\left(|v(t,x)|^2e^{-2\lambda |x|}\right)< \infty .
	\end{align*}
	Hence $Y(T)<\infty$ for any $T>0$. Clearly, (\ref{4.1}) still holds if we replace $T$ with any $t\in[0,T]$. Next, taking any $0<\epsilon<\frac{1}{3 L_{\sigma}^2}$ and applying the Gronwall's inequality we obtain
	\begin{align*}
		Y(T)\leq \frac{\Theta_{\epsilon,\lambda, T,L_b,L_\sigma, K_\sigma}}{1-3\epsilon L_\sigma^2} \exp\left\{\frac{6Te^{2\lambda^2T} L_b^2 }{1-3\epsilon L_\sigma^2}\right\} \mathbb{E}\int_{0}^{T}\int_{\mathbb{R}}|h(t,x)|^2\mathrm{d}x\mathrm{d}t .
	\end{align*}
	Since $0<\epsilon<\frac{1}{3L_\sigma^2}$ is arbitrary, we get
	\begin{align*}
		Y(T)\leq 
		\Theta_{\lambda,T,L_b,L_\sigma,K_\sigma}~\mathbb{E}\int_{0}^{T}\int_{\mathbb{R}}|h(t,x)|^2\mathrm{d}x\mathrm{d}t.
	\end{align*}
	By the definition of $Y(T)$, taking $\lambda=\frac{1}{n},\ n\in\mathbb{N}$ in the above inequality, and noting that $\Theta_{\lambda, T,L_b,L_\sigma,K_\sigma}$ is increasing with respect to $\lambda$, we have for each $n\in\mathbb{N}$,
	\begin{align*}
		\mathbb{E}\sup_{(t,x)\in[0,T]\times\mathbb{R}}\left( |u(t,x)-v(t,x)|^2 e^{-\frac{2}{n} |x|}\right)
		\leq  
		\Theta_{\lambda,T,L_b,L_\sigma,K_\sigma}~\mathbb{E}\int_{0}^{T}\int_{\mathbb{R}}|h(t,x)|^2\mathrm{d}x\mathrm{d}t.
	\end{align*}
	This proves (\ref{Ctemineq}) and hence the proof of Theorem \ref{Ctemresult} is complete.

	\vskip 0.5cm
	
	\section{Proof of Corollary \ref{Lrandom} and \ref{LCtemrandom}}
	
	\quad In this section we generalize the transportation cost inequalities  from deterministic initial values to random initial values. We will follow the same approach as in the proof of Theorem 3.1 in \cite{WZ}.
	
	\vskip 0.5cm
	\noindent \emph{Proof of Corollary \ref{Lrandom}}
	
	In fact, the result of Theorem \ref{Lresult} can be written in the form:
	\begin{align}
		W_2(Q,P^{u_0})^2\leq c_1 H(Q|P^{u_0}),\ Q\in\mathcal{P}(C([0,T],L^2_{tem})),\ u_0\in L^2_{tem}.
	\end{align}
	According to Theorem 2.1 in \cite{WZ}, it suffices to show that the Warssenstein distance between the laws of solutions is Lipschitz continuous w.r.t. the initial values, which is stated in the following proposition.
	
	\begin{proposition}\label{lip1}
		There exists a constant $c>0$ such that
		\begin{align}
			W_2(P^f,P^g)\leq c\rho (f,g),\ f,g\in L^2_{tem}.
		\end{align}
	\end{proposition}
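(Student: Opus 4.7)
The plan is to construct a synchronous coupling of $P^f$ and $P^g$ by driving both copies of SPDE \eqref{3.1} with the same space-time white noise $W$, and then to estimate the resulting path-space Wasserstein distance by a weight-by-weight Gronwall argument summed over the dyadic weights defining $\rho$. Denoting the two solutions by $u^f$ and $u^g$, their joint law is a coupling of $(P^f,P^g)$, so
\[
W_2(P^f,P^g)^2 \;\leq\; \mathbb{E}\Bigl[\sup_{t\in[0,T]}\rho\bigl(u^f(t),u^g(t)\bigr)^2\Bigr].
\]

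For each $\lambda>0$, I would subtract the mild formulations of $u^f,u^g$, take $\|\cdot\|_{L^2_\lambda}^2$ and then $\mathbb{E}\sup_{t\leq T}$, and bound the three resulting terms exactly as in the proof of Theorem \ref{Lresult}: the semigroup image of $f-g$ via \eqref{kernal}; the drift-increment term via assumption (H1), H\"older's inequality and \eqref{103.1}, as was done for $I_1$; and the stochastic integral term via Lemma \ref{moment2} at $p=2$ together with (H2(b)), as was done for $I_2$. Choosing $\epsilon<1/(3L_\sigma^2)$ to absorb the stochastic contribution on the left-hand side and invoking Gronwall's inequality then yields
\[
\mathbb{E}\Bigl[\sup_{t\leq T}\|u^f(t)-u^g(t)\|_{L^2_\lambda}^2\Bigr] \;\leq\; C_{\lambda,T}\,\|f-g\|_{L^2_\lambda}^2,
\]
with $C_{\lambda,T}$ increasing in $\lambda$; setting $C:=C_{1,T}$ this bound holds uniformly for every $\lambda=1/n$, $n\geq 1$.

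To reassemble the per-weight estimates, write $a_n(t):=\min\{1,\|u^f(t)-u^g(t)\|_{L^2_{1/n}}\}$ and apply Cauchy--Schwarz against the probability weights $(2^{-n})_n$ (using $\min\{1,x\}^2=\min\{1,x^2\}$) to get $\rho(u^f(t),u^g(t))^2\leq\sum_n 2^{-n}a_n(t)^2$. Interchanging $\sup_t$ with $\sum_n$, using $\mathbb{E}[\min\{1,X\}]\leq\min\{1,\mathbb{E}X\}$ together with the Gronwall estimate above, and exploiting $\min\{1,Cx\}\leq C\min\{1,x\}$ for $C\geq 1$, this leads to
\[
W_2(P^f,P^g)^2 \;\leq\; C\sum_{n\geq 1}2^{-n}\min\bigl\{1,\|f-g\|_{L^2_{1/n}}^2\bigr\}.
\]

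The main obstacle is the final step: matching the right-hand side with a multiple of $\rho(f,g)^2=\bigl(\sum_n 2^{-n}\min\{1,\|f-g\|_{L^2_{1/n}}\}\bigr)^2$, since a direct Cauchy--Schwarz bounds the squared sum by $\sum_n 2^{-n}a_n^2$ rather than the reverse. I would resolve this by a dichotomy: when $\rho(f,g)\geq\delta$ for a fixed constant the bound $W_2\leq 1\leq\delta^{-1}\rho(f,g)$ is trivial, while for $\rho(f,g)=\eta<\delta$ the constraint $\min\{1,\|f-g\|_{L^2_{1/n}}\}\leq 2^n\eta$ (immediate from the definition of $\rho$) combined with the monotonicity of $n\mapsto\|f-g\|_{L^2_{1/n}}$ permits a dyadic split of the sum at the threshold $n_\ast\sim\log_2(1/\eta)$, controlling the head and the geometric tail separately. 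Balancing these two contributions along the lines of \cite{WZ} to reconcile them with the squared weighted sum defining $\rho^2$ is the most delicate piece, and yields the Lipschitz bound with constant $c=c(T,L_b,L_\sigma,K_\sigma)$.
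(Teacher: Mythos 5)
Your synchronous coupling and the per-weight Gronwall estimate $\mathbb{E}\sup_{t\le T}\|u^f(t)-u^g(t)\|_{L^2_\lambda}^2\le C_{\lambda,T}\|f-g\|_{L^2_\lambda}^2$ are exactly what the paper does (same three-term decomposition, same use of \eqref{kernal}, \eqref{103.1} and Lemma \ref{moment2} with $\epsilon<1/(3L_\sigma^2)$). The gap is precisely where you flag it, and it cannot be closed along the route you sketch. After your Cauchy--Schwarz step you must bound $\sum_n 2^{-n}\min\{1,\|f-g\|^2_{L^2_{1/n}}\}$ by a constant times $\rho(f,g)^2=\bigl(\sum_n 2^{-n}\min\{1,\|f-g\|_{L^2_{1/n}}\}\bigr)^2$, and no such constant exists: take $g=0$ and $f$ a continuous bump supported on $[R,R+1]$ with $\int_{\mathbb{R}}|f|^2\,\mathrm{d}x=e^{2R/N}$, so that for large $R$ the quantity $\min\{1,\|f\|_{L^2_{1/n}}\}$ is essentially $0$ for $n<N$ and $1$ for $n\ge N$; then the left side is of order $2^{-N}$ while $\rho(f,0)^2$ is of order $2^{-2N}$, and the ratio $2^{N}$ is unbounded. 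Your dichotomy and dyadic split at $n_*\sim\log_2(1/\eta)$ hit exactly this example: the head contributes $\eta^2 2^{n_*}\sim\eta$ and the tail $2^{-n_*}\sim\eta$, so the method delivers only $W_2(P^f,P^g)^2\le C\rho(f,g)$, a H\"older-$\tfrac12$ modulus, not the claimed Lipschitz bound.

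The repair is to keep the sum over $n$ \emph{outside} the square rather than collapsing $\rho^2$ into a sum of squares: apply Minkowski's inequality in $L^2(\mathbb{P})$ to the series defining $\rho$,
\[
W_2(P^f,P^g)\le\Bigl\|\sup_{t\le T}\rho(u^f(t),u^g(t))\Bigr\|_{L^2(\mathbb{P})}\le\sum_{n\ge1}2^{-n}\Bigl\|\sup_{t\le T}\min\bigl\{1,\|u^f(t)-u^g(t)\|_{L^2_{1/n}}\bigr\}\Bigr\|_{L^2(\mathbb{P})},
\]
and note that each summand is at most $\min\bigl\{1,(\mathbb{E}\sup_{t\le T}\|u^f(t)-u^g(t)\|^2_{L^2_{1/n}})^{1/2}\bigr\}\le\max\{1,\sqrt{C_{1,T}}\}\min\{1,\|f-g\|_{L^2_{1/n}}\}$ by your own Gronwall bound and the monotonicity of $C_{\lambda,T}$ in $\lambda$; summing gives $W_2(P^f,P^g)\le c\,\rho(f,g)$ directly. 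This is in substance what the paper does: it takes the square root of the Gronwall estimate first, obtains the per-$n$ first-order bound $\mathbb{E}\sup_{t\le T}\min\{1,\|u^f(t)-u^g(t)\|_{L^2_{1/n}}\}\le c\min\{1,\|f-g\|_{L^2_{1/n}}\}$, and only then sums over $n$, so the weighted sum never has to be squared.
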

	\begin{proof}
		We denote by $u^f$ and $u^g$ the two solutions of SPDE \eqref{3.1} starting from $f,g\in L^2_{tem}$. For $\lambda>0$, we have
		\begin{align}\label{LL0}
			&\mathbb{E}~\sup_{t \leq T}\left\{\int_{\mathbb{R}}|u^f(t,x)-u^g(t,x)|^2e^{-2\lambda|x|}\mathrm{d}x\right\}\nonumber\\
			\leq & 3~\sup_{t \leq T}\left\{ \int_{\mathbb{R}}\left(\int_{\mathbb{R}} p_t(x, y)(f(y)-g(y)) \mathrm{d}y\right)^2 e^{-2 \lambda|x|} \mathrm{d}x\right\} \nonumber\\
			&+3~\mathbb{E}~\sup_{t\leq T}\left\{ \int_{\mathbb{R}}\left|\int_{0}^{t}\int_{\mathbb{R}} p_{t-s}(x,y)\left(b(u^f(s,y))-b(u^g(s,y))\right)\mathrm{d}y\mathrm{d}s\right|^2e^{-2\lambda |x|}\mathrm{d}x\right\} \nonumber\\
			&+3~\mathbb{E}\sup_{t\leq T}\left\{\int_{\mathbb{R}}
			\left|\int_{0}^{t}\int_{\mathbb{R}} p_{t-s}(x,y)\left(\sigma(u^f(s,y))-\sigma(u^g(s,y))\right)W(\mathrm{d}s,\mathrm{d}y)\right|^2e^{-2\lambda |x|}\mathrm{d}x\right\}.
		\end{align}
		By H\"{o}lder's inequality and \eqref{103.1}, we have
		\begin{align}\label{LL1}
			&\sup_{t \leq T}\left\{ \int_{\mathbb{R}}\left(\int_{\mathbb{R}} p_t(x, y)(f(y)-g(y)) \mathrm{d}y\right)^2 e^{-2 \lambda|x|} \mathrm{d}x\right\} \nonumber\\
			\leq&\sup_{t \leq T} \left\{\int_{\mathbb{R}} \left(\int_{\mathbb{R}} p_t(x, y)\mathrm{d}y\right)\left(\int_{\mathbb{R}} p_t(x, y)|f(y)-g(y)|^2 \mathrm{d}y\right)\,e^{-2 \lambda|x|} \mathrm{d}x\right\}\nonumber\\
			\leq& 2e^{2\lambda^2T}~ \int_{\mathbb{R}}|f(y)-g(y)|^2 e^{-2 \lambda|y|} \mathrm{d}y\nonumber\\
			=& 2e^{2\lambda^2T}\Vert f-g\Vert^2_{L^2_\lambda}.
		\end{align}
		Similarly,
		\begin{align}\label{LL2}
			&\mathbb{E}~\sup_{t\leq T}\left\{ \int_{\mathbb{R}}\left|\int_{0}^{t}\int_{\mathbb{R}} p_{t-s}(x,y)\left(b(u^f(s,y))-b(u^g(s,y))\right)\mathrm{d}y\mathrm{d}s\right|^2e^{-2\lambda |x|}\mathrm{d}x\right\} \nonumber\\
			\leq & 2e^{2\lambda^2T}TL^2_b ~ \int_{0}^{T} \mathbb{E}\sup_{r \leq s}\left(\int_{\mathbb{R}}|u^f(r,y)-u^g(r,y)|^2e^{-2\lambda|y|}\mathrm{d}y\right)\mathrm{d}s.
		\end{align}
		By Lemma \ref{moment2} and (\textbf{H2}) we have
		\begin{align}\label{LL3}
			&\mathbb{E}\sup_{t\leq T}\left\{\int_{\mathbb{R}}
			\left|\int_{0}^{t}\int_{\mathbb{R}} p_{t-s}(x,y)\left(\sigma(u^f(s,y))-\sigma(u^g(s,y))\right)W(\mathrm{d}s,\mathrm{d}y)\right|^2e^{-2\lambda |x|}\mathrm{d}x\right\}
			\nonumber\\
			\leq & \epsilon L^2_\sigma~ \mathbb{E}~\sup_{s \leq T} \int_{\mathbb{R}} |u^f(s,y)-u^g(s,y)|^2e^{-2\lambda |y|}\mathrm{d}y \nonumber\\
			&\quad+C_{\epsilon,\lambda,T} L^2_\sigma~\int_{0}^{T} \mathbb{E}~\sup_{r \leq s}\left(\int_{\mathbb{R}} |u^f(r,y)-u^g(r,y)|^2e^{-2\lambda|y|}\mathrm{d}y\right)\mathrm{d}s,
		\end{align}
		Then combining \eqref{LL1}-\eqref{LL3} with \eqref{LL0} together and using Gronwall's inequality, it follows that there exists some constant $c>0$ dependent on $\epsilon,\lambda,T,L_b,L_\sigma$ such that
		\begin{align*}
			\mathbb{E}~\sup_{t\leq T}\Vert u^f(t,\cdot)-u^g(t,\cdot)\Vert_{L^2_\lambda}\leq c\Vert f-g\Vert_{L^2_\lambda}.
		\end{align*}
		Note that the constant $c$ is increasing w.r.t. $\lambda$.  Hence there exists a constant $c$ independent of $n$ such that for all $n\geq 1$,
		\begin{align*}
			\mathbb{E}~ \sup_{t\leq T}\min\left\{1,\Vert u^f(t,\cdot)-u^g(t,\cdot)\Vert_{L^2_{1/n}}\right\}
			\leq c\min\left\{1,\Vert f-g\Vert_{L^2_{1/n}}\right\}.
		\end{align*}
		This implies that
		\begin{align}\label{lip}
			&W_2(P^f,P^g) \nonumber\\
			\leq &\mathbb{E}~\sup_{t\leq T} \rho(u^f(t,\cdot),u^g(t,\cdot))\nonumber\\
			= & \mathbb{E}~\sup_{t\leq T}\sum_{n=1}^{\infty} \frac{1}{2^n} \min\left\{1,\Vert u^f(t,\cdot)-u^g(t,\cdot)\Vert_{L^2_{1/n}}\right\}\nonumber\\
			\leq &\sum_{n=1}^{\infty}\frac{1}{2^n} \mathbb{E}~ \sup_{t\leq T}\min\left\{1,\Vert u^f(t,\cdot)-u^g(t,\cdot)\Vert_{L^2_{1/n}}\right\}\nonumber\\
			\leq & \sum_{n=1}^{\infty}\frac{1}{2^n} c\min\left\{1,\Vert f-g\Vert_{L^2_{1/n}}\right\}\nonumber\\
			=& c\rho(f,g).
		\end{align}
		The proof of this proposition is complete.
	\end{proof}
	With this proposition and Theorem \ref{Lresult}, we complete the proof of  Corollary \ref{Lrandom}.
	
	\vskip 0.5cm
	\noindent \emph{Proof of Corollary \ref{LCtemrandom}}
	
	From  Theorem \ref{Lresult} and Theorem \ref{Ctemresult}, we see that for $u_0\in L^2_{tem}\cap C_{tem}$,
	\begin{align}
		W_2(Q,P^{u_0})^2\leq CH(Q|P^{u_0}),\quad Q\in \mathcal{P}(C([0,T],L^2_{tem}\cap C_{tem})),
	\end{align}
	$P^{u_0}$ is the law of the solution of the SPDE \eqref{3.1} started at $u_0$.
	Also, according to Theorem 2.1 in \cite{WZ}, to prove Corollary \ref{LCtemrandom} we only need to establish the following proposition.
	\begin{remark}
		In fact, in $C_{tem}$ the Lipschitz property like Proposition \ref{lip1} does not exist, unless we consider the equation \eqref{3.1} in $C_{tem}\cap L^2_{tem}$.
	\end{remark}
	\begin{proposition}
		There exists a constant $\tilde{c}>0$ such that
		\begin{align}
			W_2(P^f,P^g)\leq \tilde{c}\rho (f,g),\ f,g\in L^2_{tem}\cap C_{tem}.
		\end{align}
	\end{proposition}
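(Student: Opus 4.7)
The plan is to adapt the argument of Proposition \ref{lip1} so as to control the difference $u^f-u^g$ in both the weighted $L^2$-norm and the weighted uniform norm simultaneously. Proposition \ref{lip1} already yields
\[
\mathbb{E}\sup_{t\leq T}\Vert u^f(t,\cdot)-u^g(t,\cdot)\Vert_{L^2_\lambda}^2 \leq c_\lambda\,\Vert f-g\Vert_{L^2_\lambda}^2,
\]
so it remains to prove, for each $\lambda>0$, the analogous weighted-supremum estimate
\[
Y_\lambda(T):=\mathbb{E}\sup_{(t,x)\in[0,T]\times\mathbb{R}}\bigl(|u^f(t,x)-u^g(t,x)|^2 e^{-2\lambda|x|}\bigr)
\leq \tilde c_\lambda\bigl(\varrho_\lambda(f,g)^2+\Vert f-g\Vert_{L^2_\lambda}^2\bigr).
\]

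To carry this out, decompose $u^f(t,x)-u^g(t,x)=P_t(f-g)(x)+I_b(t,x)+I_\sigma(t,x)$ into the initial-data, drift-difference and stochastic-difference pieces, and bound each in the weighted uniform norm. For $P_t(f-g)$, inequality \eqref{103.1} immediately yields $|P_t(f-g)(x)|e^{-\lambda|x|}\leq 2e^{\lambda^2 T/2}\varrho_\lambda(f,g)$. For $I_b$, assumption ({\bf H1}) together with \eqref{103.1} produces a Gronwall-type bound of the form $C_\lambda\int_0^T Y_\lambda(s)\,\mathrm{d}s$, exactly as in the treatment of term $I$ in the proof of Theorem \ref{Ctemresult}. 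For $I_\sigma$, Lemma \ref{210121.1036}(ii) applied with $p=2$ together with ({\bf H2(b)}) gives
\[
\mathbb{E}\sup_{(t,x)\in[0,T]\times\mathbb{R}}\bigl(|I_\sigma(t,x)|^2 e^{-2\lambda|x|}\bigr)
\leq \epsilon L_\sigma^2\, Y_\lambda(T) + \Theta_{\epsilon,\lambda,T}\,L_\sigma^2\,
\mathbb{E}\int_0^T\Vert u^f(s,\cdot)-u^g(s,\cdot)\Vert_{L^2_\lambda}^2\,\mathrm{d}s,
\]
and by Fubini combined with Proposition \ref{lip1} the last time-space $L^2$-integral is dominated by $T\,c_\lambda\,\Vert f-g\Vert_{L^2_\lambda}^2$.

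Collecting these bounds, choosing $0<\epsilon<(3L_\sigma^2)^{-1}$ to absorb the $\epsilon L_\sigma^2 Y_\lambda(T)$ term on the left, and invoking Gronwall's inequality in $T$, one obtains the desired estimate for $Y_\lambda(T)$ with a constant $\tilde c_\lambda$ that is increasing in $\lambda$. Taking square roots, summing the dyadic series $\sum_{n\geq 1}2^{-n}$ at $\lambda=1/n$ in both the $\rho$- and $\varrho$-definitions, and combining with the $L^2_{tem}$ bound from Proposition \ref{lip1}, one deduces $W_2(P^f,P^g)\leq \tilde c\,(\rho(f,g)+\varrho(f,g))$, which is the Lipschitz property on $L^2_{tem}\cap C_{tem}$ needed for Corollary \ref{LCtemrandom}.

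The main obstacle is that the sup-norm estimate cannot be closed on its own: the lower-order moment bound \eqref{220525.1627} unavoidably produces a space-time $L^2$-integrated remainder which cannot be reabsorbed into $Y_\lambda(T)$. This is precisely the phenomenon flagged in the remark preceding the proposition and explains why Lipschitz continuity fails on $C_{tem}$ alone; the intersection framework rescues the argument by letting us dominate that remainder through the already-established $L^2_{tem}$ Lipschitz estimate.
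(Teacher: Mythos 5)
Your proposal is correct and follows essentially the same route as the paper: decompose $u^f-u^g$ into initial-data, drift and stochastic pieces, bound the first two via \eqref{103.1} and Gronwall, apply Lemma \ref{210121.1036}(ii) with $p=2$ to the stochastic piece, absorb the $\epsilon L_\sigma^2 Y_\lambda(T)$ term, and control the leftover space--time $L^2$ remainder by the weighted $L^2$ estimate before summing the dyadic series. The only (harmless) difference is that you dominate $\mathbb{E}\int_0^T\Vert u^f(s,\cdot)-u^g(s,\cdot)\Vert_{L^2_\lambda}^2\,\mathrm{d}s$ directly by invoking Proposition \ref{lip1}, whereas the paper re-derives the pointwise-in-time bound \eqref{GG} by an argument analogous to Lemma \ref{L2}; both yield the same conclusion.
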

	\begin{proof}
		Let $u^f$($u^g$, respectively) be the unique solution of SPDE \eqref{3.1} with $u_0=f\in L^2_{tem}\cap C_{tem}$($g$, respectively).
		
		For $\lambda>0$, by \eqref{3.3} and Gronwall's inequality we can obtain
		\begin{align*}
		&\sup_{x\in\mathbb{R}} \left(\left|u^f(t,x)-u^g(t,x)\right|^2e^{-2\lambda |x|}\right) \\
		\leq &c_{\lambda,t,L_b} \left(\sup_{x\in\mathbb{R}}\left(|f(x)-g(x)|^2e^{-2\lambda |x|}\right) \right.\\
		&\left. +\sup_{(t,x)\in[0,T]\times\mathbb{R}}
		\left(\left|\int_{0}^{t}\int_{\mathbb{R}} p_{t-s}(x,y)\left(\sigma(u^f(s,y))-\sigma(u^g(s,y)\right)W(\mathrm{d}s,\mathrm{d}y)\right|^2e^{-2\lambda |x|}\right)
		\right),\ t\in[0,T].
		\end{align*}
		According to Lemma \ref{210121.1036}, we have for small $\epsilon>0$,
	\begin{align}\label{ZZ}
		&\mathbb{E} \sup_{(t,x)\in[0,T]\times\mathbb{R}}\left(\left|u^f(t,x)-u^g(t,x)\right|^2e^{-2\lambda |x|}\right) \nonumber\\
		\leq &c_{\lambda,T,L_b} ~\sup_{x\in\mathbb{R}}\left(|f(x)-g(x)|^2e^{-2\lambda |x|}\right) \nonumber\\
		&+\epsilon~c_{\lambda,T,L_b,L_\sigma} ~\mathbb{E}\sup_{(t,x)\in[0,T]\times\mathbb{R}}\left(\left|u^f(t,x)-u^g(t,x)\right|^2e^{-2\lambda |x|}\right) \nonumber\\
		&+c_{\epsilon,\lambda,T,L_b,L_\sigma} ~\mathbb{E}\int_{0}^{T}\int_{\mathbb{R}}\left|u^f(t,x)-u^g(t,x)\right|^2e^{-2\lambda|x|}\mathrm{d}x\mathrm{d}t.
	\end{align}
	And similarly to Lemma \ref{L2} we can prove
		\begin{align}\label{GG}
			\mathbb{E}\int_{\mathbb{R}}\left|u^f(t,x)-u^g(t,x)\right|^2e^{-2\lambda|x|}\mathrm{d}x\leq c_{\lambda,T,L_b,L_\sigma} \int_{\mathbb{R}} |f(y)-g(y)|^2e^{-2\lambda |y|}\mathrm{d}y.
		\end{align}
		Then by \eqref{ZZ} and \eqref{GG} and taking some appropriate $\epsilon$ we obtain
		\begin{align*}
			&\mathbb{E} \sup_{(t,x)\in[0,T]\times\mathbb{R}}\left(\left|u^f(t,x)-u^g(t,x)\right|^2e^{-2\lambda |x|}\right) \nonumber\\
			\leq &c'\left( \sup_{x\in\mathbb{R}}\left(|f(x)-g(x)|^2e^{-2\lambda |x|}\right)
			+\int_{\mathbb{R}} |f(y)-g(y)|^2e^{-2\lambda |y|}\mathrm{d}y\right).
		\end{align*}
	    for some constant $c'$ increasing w.r.t. $\lambda$.
		Hence,  there exists a constant $c''>0$, such that for all $\lambda\in(0, 1]$,
		\begin{align*}
			\mathbb{E}\sup_{t\in[0,T]}\varrho_\lambda(u^f(t,\cdot),u^g(t,\cdot))\leq c'' (\varrho_\lambda(f,g)+ \Vert f-g\Vert_{L^2_\lambda}).
		\end{align*}
		Consequently we have
		\begin{align*}
			\mathbb{E}~\sup_{t\in [0,T]}\varrho(u^f(t,\cdot),u^g(t,\cdot))
			= & \mathbb{E}~\sup_{t\in [0,T]}\sum_{n=1}^{\infty} \frac{1}{2^n} \min\left\{1,\varrho_{1/n} (u^f(t,\cdot),u^g(t,\cdot))\right\}\nonumber\\
			\leq &\sum_{n=1}^{\infty}\frac{1}{2^n} \mathbb{E}~ \sup_{t\in [0,T]}\min\left\{1,\varrho_{1/n} (u^f(t,\cdot),	u^g(t,\cdot)) \right\}\nonumber\\
			\leq & \sum_{n=1}^{\infty}\frac{1}{2^n} c''\min\left\{1,\varrho_{1/n}(f,g)+ \Vert f-g\Vert_{L^2_{1/n}}\right\}\nonumber\\
			\leq & c''(\varrho(f,g)+\rho(f,g)).
		\end{align*}
		Together with \eqref{lip}, we obtain
		\begin{align*}
			\mathbb{E}~\sup_{t\in[0,T]}\left[\varrho(u^f(t,\cdot),u^g(t,\cdot))+\rho (u^f(t,\cdot),u^g(t,\cdot))\right]\leq \tilde{c}(\varrho(f,g)+\rho(f,g)).
		\end{align*}
		The proof of this Proposition is complete.
	\end{proof}
	
	According to Theorem 2.1 in \cite{WZ}, we have completed the proof of Corollary \ref{LCtemrandom}.
	
	$\blacksquare$

\vskip 0.3cm
\noindent{\bf Acknowledgement}. 
This work is partially supported by the National Key R\&D Program of China (No. 2022YFA1006001), the National Natural Science Foundation of China (No. 12131019, No. 11721101, No. 12001516), the Fundamental Research Funds for the Central Universities (No. WK3470000031, No. WK3470000024, No. WK3470000016), and the School Start-up Fund (USTC) KY0010000036.

\end{document}